\numberwithin{equation}{section}
\numberwithin{figure}{section}
\theoremstyle{plain}
\newtheorem{thm}{\protect\theoremname}
  \theoremstyle{plain}
  \newtheorem{lem}[thm]{\protect\lemmaname}
  \theoremstyle{remark}
  \newtheorem*{rem*}{\protect\remarkname}
  \theoremstyle{remark}
  \newtheorem{rem}[thm]{\protect\remarkname}
  \theoremstyle{plain}
  \newtheorem{cor}[thm]{\protect\corollaryname}
\providecommand{\lemmaname}{Lemma}
\providecommand{\theoremname}{Theorem}
\providecommand{\corollaryname}{Corollary}
\newcommand{\abs}[1]{\ensuremath{|#1|}}
\newcommand{\Abs}[1]{\ensuremath{\left|#1\right|}}
\newcommand{\norm}[2]{\ensuremath{|\!|#1|\!|_{#2}}}
\newcommand{\Norm}[2]{\ensuremath{\left|\!\left|#1\right|\!\right|_{#2}}}
\providecommand{\lemmaname}{Lemma}
  \providecommand{\remarkname}{Remark}
\providecommand{\theoremname}{Theorem}
\providecommand{\lemmaname}{Lemma}
\providecommand{\theoremname}{Theorem}
\providecommand{\theoremname}{Theorem}
\providecommand{\theoremname}{Theorem}
\DeclarePairedDelimiterX{\inp}[2]{\langle}{\rangle}{#1, #2}
  \providecommand{\corollaryname}{Corollary}
  \providecommand{\lemmaname}{Lemma}
  \providecommand{\remarkname}{Remark}
\providecommand{\theoremname}{Theorem}
\begin{document}

\title{Interpolation without commutants}

\author{Oleg Szehr}

\address{Dalle Molle Institute for Artificial Intelligence (IDSIA) - SUPSI/USI, Manno, Switzerland.}

\email{oleg.szehr@posteo.de}

\author{Rachid Zarouf}

\address{Aix-Marseille Universit\'e, EA-4671 ADEF, ENS de Lyon, Campus Universitaire de Saint-J\'er\^ome, 40 Avenue Escadrille Normandie Niemen, 13013 Marseille, France.}

\email{rachid.zarouf@univ-amu.fr}

\address{Department of Mathematics and Mechanics, Saint Petersburg State University,
28, Universitetski pr., St. Petersburg, 198504, Russia.}

\email{rzarouf@gmail.com}

\keywords{Nevanlinna-Pick interpolation, Carath\'eodory-Schur interpolation,  Beurling-Sobolev spaces, Wiener algebra.
\\
{2020 Mathematics Subject Classification: Primary: 30E05; Secondary: 30H50}}


\begin{abstract}
We introduce a \lq\lq{}dual-space approach\rq\rq{} to mixed Nevanlinna-Pick/Cara\-th\'eo\-do\-ry-Schur interpolation in Banach spaces $X$ of holomorphic functions on the disk. Our approach can be viewed as complementary to the well-known commutant lifting approach of D.~Sarason and B.~Nagy-C.Foia\c{s}. We compute the norm of the minimal interpolant in $X$ by a version of the Hahn-Banach theorem, which we use to extend functionals defined on a subspace of kernels without increasing their norm. This Functional extensions lemma plays a similar role as Sarason\rq s Commutant lifting theorem but it only involves the predual of $X$ and no Hilbert space structure is needed. As an example, we present the respective Pick-type interpolation theorems for Beurling-Sobolev spaces.
\end{abstract}

\maketitle

\section{\label{subsec:the_duality_method}Introduction}

\subsection{The commutant lifting approach to interpolation theory}

Given a finite sequence of distinct points $\lambda={\left(\lambda_{i}\right)}_{i=1}^{n}$
in $\mathbb{D}$ and another finite sequence $w={\left(w_{i}\right)}_{i=1}^{n}$
in $\mathbb{C}$, the Nevanlinna-Pick interpolation problem is to
find necessary and sufficient conditions for the existence of $f\in\mathcal{H}ol(\mathbb{D})$
that is bounded by $1$ and that interpolates the data, i.e.~$\Norm{f}{H^{\infty}}:=\sup_{z\in\mathbb{D}}\abs{f(z)}\leq1$
and $f(\lambda_{i})=w_{i}$. The classical solution of G. Pick \cite{PG}
and (later) R. Nevanlinna \cite{NR1,NR2} asserts that such $f$ exists
if and only if the \lq\lq{}Pick-matrix\rq\rq{}
\[
\left(\frac{1-w_{i}{\overline{w_{j}}}}{1-\lambda_{i}{\overline{\lambda_{j}}}}\right)_{1\leq i,\,j\leq n}
\]
is positive-semidefinite. The celebrated commutant-lifting approach
of D.~Sarason \cite{SD} and B. Nagy-C.~Foia\c{s} \cite{NF2,AM} established
an operator-theoretic perspective on the interpolation problem. The
main point is to view $H^{\infty}$ as a multiplier algebra of the
Hardy space $H^{2}$ (of holomorphic functions, whose Taylor coefficients
are square-summable) and to identify $f\in H^{\infty}$ with the respective
multiplication operator $Mult_{f}:H^{2}\rightarrow H^{2}$, $\phi\mapsto f\phi$.
In other words the norm of $f\in H^{\infty}$ equals to the operator
norm of $Mult_{f}$. The Nevanlinna-Pick problem asks for conditions
on the restriction $M_{f}$ of $Mult_{f}$ to a subspace corresponding
to $f(\lambda_{i})=w_{i}$ such that $M_{f}$ can be extended to the
whole of $H^{2}$ maintaining $\Norm{Mult_{f}}{}\leq1$. More precisely,
for a Blaschke product
\[
B=\prod_{\lambda_{i}\in\lambda}b_{\lambda_{i}},\qquad b_{\lambda_{i}}=\frac{z-\lambda_{i}}{1-\overline{\lambda_{i}}z}
\]
the range $BH^{2}$ of the multiplication operator $Mult_{B}$ is
a linear subspace of $H^{2}$ of functions that vanish at $\lambda$.
The orthogonal complement
\begin{align*}
K_{B}:=H^{2}\ominus BH^{2},
\end{align*}
consists of rational functions whose poles are $1/\overline{\lambda_{i}}$, $i=1\dots n$. Let $M_{f}$ be the compression of the multiplication operator $Mult_{f}$
from $H^{2}$ to $K_{B}$, i.e.
\begin{align*}
M_{f}:\:K_{B} & \rightarrow K_{B}\\
\phi & \mapsto(P_{B}\circ Mult_{f})({\phi})=P_{B}(f\phi),
\end{align*}
where $P_{B}$ is the orthogonal projection from $H^{2}$ to $K_{B}$.
It is clear that $\Norm{M_{f}}{}\leq\Norm{f}{H^{\infty}}$ and
that $M_{f}$ only depends on the values $\{f(\lambda_{i})=w_{i}\}_{i=1}^{n}$.
Sarason\rq s main result \cite{SD,AM} asserts that if $M_{g}$ is any
operator that commutes with $M_{f}$ on $K_{B}$ then $M_{g}$ can
be extended to an operator $Mult_{g}$ that commutes with $Mult_{f}$
on $H^{2}$ \emph{without increasing the operator's norm}. In conclusion
one finds
\begin{align*}
\Norm{M_{f}}{} & =\inf\left\{ \Norm{Mult_{g}}{}:P_{B}\circ Mult_{g}=M_{f}\right\} \\
 & =\inf\left\{ \Norm{g}{H^{\infty}}:\:g\in H^{\infty},\:g(\lambda_{i})=f(\lambda_{i})=w_{i},\:\forall i\right\}
\end{align*}
where the first equality is a consequence of Sarason\rq s result and
the second holds by {the} construction of $K_{B}$. It is elementary to
check that $\Norm{M_{f}}{}\leq1$ is equivalent to the Pick-matrix
being positive-semidefinite~\cite{NN2}.

This new perspective allowed for various generalizations of the classical
Nevanlinna-Pick problem. To mention a few, it has been noticed that
the assumption of non-degeneracy of the $\lambda_{i}$ is not principal.
The purely degenerate case $\lambda_{1}=...=\lambda_{n}=0$ corresponds
to the prescription of the first $n$ Taylor-coefficients of $f$.
The respective interpolation theory has been studied long before as
the Carath\'eodory-Schur interpolation problem. Thus the commutant lifting
approach provided a unified framework with a simultaneous discussion
of two classical problems of interpolation theory. Second, Sarason\rq s
result was generalized by B. Nagy-C. Foia\c{s} \cite{NF2,AM}, whose commutant
lifting theorem asserts that any operator commuting with $A$ can
be lifted to an operator commuting with any unitary dilation of $A$
without increasing its norm. Sarason\rq s lemma is the special case that
$A$ is the multiplication operator $Mult_{z}$ on $H^{2}$. A more
general result is the intertwining lifting theorem of Nagy-Foia\c{s} \cite{FF,NF1}.
As a consequence the $H^{2}$-specific discussion has been generalized
to \emph{Reproducing Kernel Hilbert Space }(RKHS) to study interpolation
problems of the respective commutant algebras: Let a positive-definite function 
{$(z,\zeta)\longmapsto\kappa(z,\zeta)$} on $\mathbb{D}\times\mathbb{D}$
be given
{(i.e. $\sum_{i,j}{a}_{i}\overline{a}_{j}\kappa(\lambda_{i},\lambda_{j})>0$
for all finite subsets ${\left(\lambda_{i}\right)}\subset\mathbb{D}$ and all
non-zero families of complex numbers $\{a_{i}\}$)} and let
{$\kappa_{\zeta}=\kappa(\cdot,\zeta)$.}
Following Aronszajn~\cite{AN} there exists a unique Hilbert space
of functions $\mathcal{H}(\kappa)$, such that $\kappa$ enjoys the
\emph{reproducing kernel property}, i.e.~for all $f\in\mathcal{H}(\kappa)$
it holds
\[
f(\zeta)=\left\langle f,\,\kappa_{\zeta}\right\rangle _{\mathcal{H}(\kappa)}.
\]
{When $\kappa$ is holomorphic in the first variable and antiholomorphic
in the second this yields a RKHS $\mathcal{H}(\kappa)$ of holomorphic
functions on $\mathbb{D}$.}
 The algebra of multipliers $\mathbb{M}_{\kappa}$
is a Banach algebra of functions $\phi$ for which $f\phi\in\mathcal{H}(\kappa)$
for each $\phi\in\mathcal{H}(\kappa)$ and the norm is the norm of
the corresponding multiplication operator on $\mathcal{H}(\kappa)$.
For the Cauchy kernel $\kappa_{\zeta}(z)=\frac{1}{1-\bar{\zeta}z}$
we obtain $\mathcal{H}(\kappa)=H^{2}$ and $\mathbb{M}_{\kappa}=H^{\infty}$.
It is a natural question to ask for which kernels $\kappa$ apart from the Cauchy kernel a ``Nevanlinna-Pick theorem'' holds. Assuming that $\kappa$ is a so-called {complete} Nevanlinna-Pick kernel, i.e. it satisfies the identity 
\[
\kappa({z},{\zeta})-\frac{\kappa({z},\mu)\kappa(\mu,{\zeta})}{\kappa(\mu,\mu)}=F_{\mu}({z},{\zeta})\kappa({z},{\zeta})
\]
for some $\mu\in\mathbb{D}$, $\kappa(\mu,\mu)\neq0$ and some positive semidefinite function $F_{\mu}$ on $\mathbb{D}\times\mathbb{D}$ such that $\abs{F_{\mu}({z},{\zeta})}<1$, it is shown in \cite{QP} that
the Nevanlinna-Pick theorem holds~{mutatis mutandis}: There exists a multiplier $f$ of norm at most $1$ which satisfies the interpolation condition $f(\lambda_{i})=w_{i}$ if and only if
\[
\left[\kappa(\lambda_{i},\,\lambda_{j})(1-{{w_{i}}\overline{w_{j}}})\right]_{1\leq i,\,j\leq n}\geq0.
\]
The interesting article~\cite{SH} contains a detailed discussion of complete Nevanlinna-Pick kernels in the context of Dirichlet spaces. A general commutant lifting theorem for spaces with Nevanlinna-Pick kernels is proved in \cite{BTV}.
\subsection{Our approach and its motivation}

In this article we study interpolation problems beyond the context
of RKHS. Let $X$ be a Banach space that is continuously embedded
into $Hol(\mathbb{D})$. Our goal is to obtain information on the
interpolation quantity
\[
I_{X}(\lambda,\,w)=\inf\left\{ \Norm{f}{X}:\:f\in X,\:f^{(j)}(\lambda_{i})=w_{i}^{(j)}\:,\,1\leq i \leq n,\,0\leq j<n_{i}\right\} ,
\]
where $\lambda_{i}$ carries degeneracy $n_{i}$, $$w=\left(w_{1}^{(0)},w_{1}^{(1)},\dots,w_{1}^{(n_{1}-1)},\dots,w_{s}^{(0)},w_{s}^{(1)},\dots,w_{s}^{(n_{s}-1)},\dots\right)$$
and $f^{(j)}$ stands for the $j$-th derivative of $f$. This definition
covers mixed problems of Nevanlinna-Pick and Carath\'eodory-Schur type
\cite{NN2}. Our approach is closely related to the established commutant lifting
theory. A major common point will lie in the role of the space $K_{B}$
and the compressions $M_{f}$ of the multiplication operator $Mult_{f}:X\rightarrow X$.
Our main conceptual insight might be seen in the observation that
no RKHS structure is needed to identify the space $K_{B}$ and that
the formulation of the solution in terms of the multiplication operator
(and with it the occurrence of the multiplier algebra) can be done
in an independent step. Thus we conceptually split the Hilbert space
specific commutant lifting theorem, into a \lq\lq{}Functional extension
lemma\rq\rq{} and a formulation of the solution in terms of the
multiplication operator. Just as the commutant lifting theorem allows
one to extend an operator from $K_{B}$ to $H^{2}$ without increasing
the operator norm this lemma allow us to extend functionals from $K_{B}$
to a Banach space of holomorphic functions without increasing the
norm of the functional. Our motivation is twofold,\\
 \emph{1)} on the theoretical side: The commutant lifting approach
has generated significant impact on interpolation theory, operator
theory, functional analysis and beyond. We see our method as complementary
to this approach, but for certain types of spaces it is simpler. Instead of studying
the commutant algebra of a RKHS, we will work with duals
of a class of Banach spaces. In many cases the dual space turns out
to be more tangible than the commutant algebra.\\
 \emph{2)} on the practical side: Our result is interesting from a
practical standpoint because it dramatically simplifies the numerical
computation of the quantity $I_{X}(\lambda,w)$. The search domain for the minimization is an infinite-dimensional
Banach space, and therefore the search does not admit implementation
on finite-memory and finite-precision computers. In contrast the representation
afforded by the functional extension lemma reduces the original (infinite-dimensional)
minimization problem to a search in $K_{B}$ for an optimal $n$-dimensional
vector of coefficients $\alpha=(\alpha_{1},\alpha_{2},...,\alpha_{n})\in\mathbb{C}^{n}$,
which can be obtained by applying any standard function minimization
algorithm. Consequently, we provide the theoretical basis for the
reduction of the general optimization problem to one that can actually
be implemented in practice.

\subsection{Outline of the paper}

In Section~\ref{Section1} we explain how our approach applies to a large class of Banach spaces and we compare our methods in more detail with those of D.~Sarason. Section~\ref{sec:General-result} contains our main results. Lemma \ref{thm:main}
provides an expression for $I_{X}(\lambda,\,w)$ in terms of the norm
of a functional on the space $K_{B}$. Assuming that $X$ is a unital Banach algebra Theorem~\ref{thm:shift_main_algebra} relates this expression to the compressed
multiplication operator. This theorem extends Sarason\rq s original result
to any unital algebra $X$ whose predual contains $K_{B}=K_{\lambda}$.
Corollary \ref{thm:main_3} applies interpolation theory to derive sharp
estimates on norms of functions of algebraic operators admitting an
$X$-functional calculus. We conclude Section \ref{sec:General-result} with
an application of Theorem \ref{thm:shift_main_algebra} to the case $X=W$, the \textit{Wiener algebra} of
absolutely convergent Taylor series. Section \ref{Section1} contains the details regarding $W$, its definition and our motivation to study it. Section~\ref{sec:Applications_of_1st_Th} shows applications of Lemma \ref{thm:main} to the so-called \emph{Beurling-Sobolev spaces} $X=l_{A}^{q}(\beta)$ and $X=H^{\infty}$. Section \ref{sec:Interpolation-on-a} discusses the situation in which the data $\lambda$ has an accumulation point inside the disk. In this case there is a unique solution $f$ to the interpolation problem in $X$. Finally Section 6 contains the proof of Corollary \ref{thm:NP} where we recover Pick's usual criterion.
\section{\label{Section1}Interpolation in Banach spaces}

Our approach can be seen as a \lq\lq{}duality based\rq\rq{}
discussion of interpolation theory in the Banach space $X$. We endow $Hol(\mathbb{D})$
with a formal Cauchy-type scalar product
\[
\left\langle f,\,g\right\rangle =\sum_{k\geq0}\hat{f}(k)\overline{\hat{g}(k)},
\]
where $f=\sum_{k\geq0}\hat{f}(k)z^{k}$ denotes the Taylor expansion
of $f$. For $f,g\in H^{2}$ this coincides with the usual $H^{2}$
scalar product $\left( \cdot,\,\cdot\right)_{H^{2}}$ inherited from $L^{2}$, i.e.~$\left\langle f,\,g\right\rangle=(f,\,g)_{H^{2}}$. We assume that $X$ is a dual space $X=Y'$ w.r.t.~$\left\langle \cdot,\,\cdot\right\rangle$. $Y$
will always denote the \emph{exact} predual of $X$ which means that the
norm of $g$ in $Y$ can be computed by the classical Hahn-Banach
formula
\begin{align*}
\Norm{g}{Y}:=\sup\left\{ \frac{\abs{{\langle g,\,f\rangle}}}{\Norm{f}{X}}\::\:f\in X\right\}.
\end{align*}
We will also assume that the predual $Y$ contains
the set of all analytic polynomials as a dense subset. Any function $f\in X$ can be interpreted as a functional on $Y$,
$f\mapsto\tilde{f}:=\left\langle \cdot,\,f\right\rangle $,
and for the norm we have by H\"older's inequality
\[
\Norm{f}{X}=\norm{\tilde{f}}{Y'}:= \sup\left\{ \frac{\abs{{\langle g,\,f\rangle}}}{\Norm{g}{Y}}\::\:g\in Y\right\} .
\]
The main point on which our interpolation theory is footed is that $Y$ contains the space $K_{\lambda}$ of rational functions whose poles are located at $1/\overline{\lambda_{i}}$, with possible multiplicity $n_{i}$:
\[
K_{\lambda}=:{\rm span}\{k_{\lambda_{i},\,j}:\:1\leq i\leq n,\,0\leq j<n_{i}\},
\]
where for $\lambda_{i}\neq0$, $k_{\lambda_{i},\,j}=\left(\frac{d}{d\overline{\lambda_{i}}}\right)^{j}k_{\lambda_{i}}$
and $k_{\lambda_{i}}=\frac{1}{1-\overline{\lambda_{i}}z}$ is the
Cauchy kernel at $\lambda_{i}$ while $k_{0,\,i}=z^{i}$. In other
words we only consider $X$ such that the scalar products $\left\langle f,\,k_{\lambda_{i}}\right\rangle$
are finite, which is a very mild assumption as long as $n$ is finite,
i.e.~the boundary behaviour of the kernels $k_{\lambda_{i},j}$ is
regular. Under this assumption we have for any $f\in X$ that
\begin{align*}
\left\langle f,\,k_{\lambda_{i}}\right\rangle =\sum_{k\geq0}\hat{f}(k)\lambda_{i}^{k}=f(\lambda_{i})
\end{align*}
and similarly
\begin{align*}
\left\langle f,\,k_{\lambda_{i},\,j}\right\rangle =f^{(j)}(\lambda_{i}).
\end{align*}

To compare to the setting of commutant lifting observe that no
matter what $X$ and $Y$ are it still holds that $K_{\lambda}=K_{B}=H^{2}\cap(BH^{2})^{\bot}$,
where $B$ is the finite Blaschke product corresponding to $\lambda$.
If $\lambda_{i}\neq\lambda_{j}$ for $i\neq j$ we have $K_{\lambda}=\textnormal{span}\{k_{\lambda_{i}}\}$,
which corresponds to the Nevanlinna-Pick problem. The purely degenerate
kernels $K_{\lambda}=\textnormal{span}\{z^{i}\}$ correspond to the
Carath\'{e}odory-Schur problem. Notice that
\begin{itemize}
\item Sarason computes $I_{H^{\infty}}(\lambda,\,w)$ by lifting an operator
commuting with $M_{z}$ on {$K_{B}$} to an operator commuting
with $Mult_{z}$ on the whole of $H^{2}$ without increasing its norm,
\item we compute $I_{X}(\lambda,\,w)$ by using a version of the Hahn-Banach
theorem extending a functional from the subspace $K_{\lambda}$ of
$Y$ to the whole space $Y$ without increasing its norm. Subsequently
we rewrite the norm of the functional on $K_{\lambda}$ in terms of
the compressed multiplication operator and thereby extend Sarason\rq s
original result.
\end{itemize}
In view of applications we are particularly interested in the case
that $X=W\subsetneq H^{\infty}$ is the \textit{Wiener algebra} of
absolutely convergent Taylor series
\begin{align*}
W:=\{f=\sum_{j\geq0}\hat{f}(j)z^{j}\in\mathcal{H}ol(\mathbb{D})\::\:\Norm{f}{W}:=\sum_{j\geq0}\abs{\hat{f}(j)}<\infty\}.
\end{align*}
{To the best of our knowledge neither the Nevanlinna-Pick nor the Carath\'{e}odory-Schur interpolation problem have been studied in this setup before.} 
By von Neumann\rq s inequality Hilbert space contractions admit an
$H^{\infty}$ functional calculus. As a consequence Sarason\rq s $H^{\infty}$-interpolation
theory has contributed significant insight to the study of such operators~\cite{NFBK}. Similarly \emph{Banach space contractions}
are related to a Wiener algebra functional calculus. Our interest
in $W$ comes from developing an {analogous} theory for contractions
on Banach space.

More generally we will discuss the \emph{Beurling-Sobolev spaces}
of functions $f\in\mathcal{H}ol(\mathbb{D})$ whose sequence of Taylor
coefficients $\{\hat{f}(j)\}_{j\geq0}$ are contained in the weighted
sequence spaces $l^{q}(\beta)$, $q\in[1,\,\infty]$, $\beta\in\mathbb{R}$:

\[
X=l_{A}^{q}(\beta):=\left\{ f=\sum_{j\geq0}\hat{f}(j)z^{j}\in\mathcal{H}ol(\mathbb{D})\::\:\norm{f}{l_{A}^{q}(\beta)}:=\left(\sum_{j\geq0}\abs{\hat{f}(j)}^{q}\omega_{j}^{q}\right)^{1/q}<\infty\right\} ,
\]
where $w_{0}=1$ and $w_{j}=j^{\beta}$ for $j\geq1$.
{Again the general interpolation problem in such spaces has not been previously investigated.}
Notice that $W=l_{A}^{1}(0)$ and that the Hilbert space $l_{A}^{2}(0)$
is just the standard Hardy space $H^{2}$. It is easily verified that
those spaces satisfy our assumptions and that the {norm on the} predual of $l_{A}^{q}(\beta)$
is {\norm{\cdot}{l_{A}^{p}(-\beta)}} where $p$ is the conjugate exponent of $q$:
$\frac{1}{p}+\frac{1}{q}=1$. 
We also show how our method applies
to $X=H^{\infty}$, whose predual is given by \cite[Chapter VII]{KP},
\[
H^{\infty}=(L^{1}/\overline{H_{0}^{1}})',
\]
and we recover Pick's classical result. (Here $L^{1}=L^{1}(\partial\mathbb{D})$
is the usual $L^{1}$ space of the unit circle and $H_{0}^{1}=zH^{1}$
is a subspace of the respective Hardy space $H^1$, see~\cite{NN2}.)

\section{\label{sec:General-result}Main results}

Let $X$, $Y$ and $K_{\lambda}$ be given. Our goal is to express
$I_{X}(\lambda,\,w)$ in terms of a quantity that can be determined
from $K_{\lambda}$ alone. We suppose  {for notational convenience} that $\lambda_{i}\neq\lambda_{j}$ for $i\neq j$. {In case that $\lambda_i$ carries degeneracy $n_i$ the below argumentation can be immediately extended by considering the kernels $k_{\lambda_{i},j}$, $0\leq j\leq n_i$. Plugging in definitions we have}
\begin{eqnarray*}
I_{X}(\lambda,\,w) & = & \inf\left\{ \Norm{f}{X}:\:f\in X,\:f(\lambda_{i})=w_{i},\:\forall i=1\dots n\right\} \\
 & = & \inf\left\{ \Norm{\tilde{f}}{Y'}:\:f\in X,\:\tilde{f}(k_{\lambda_{i}})={\overline{w_{i}}},\:\forall i=1\dots n\right\} ,
\end{eqnarray*}
where we have used that $\Norm{f}{X}=\Norm{\tilde{f}}{Y'}$ and $\tilde{f}(k_{\lambda_{i}})={{\langle k_{\lambda_{i}},f}\rangle=\overline{\langle f,k_{\lambda_{i}}}\rangle=\overline{w_{i}}}$.
The condition $\tilde{f}(k_{\lambda_{i}})={\overline{w_{i}}}$ means that the restriction
$\tilde{f}\big|_{K_{\lambda}}$ coincides with the functional
\begin{align*}
\tilde{k}:K_{\lambda} & \rightarrow\mathbb{C}\\
\sum_{i=1}^{n}\alpha_{i}k_{\lambda_{i}} & \mapsto\tilde{k}(\sum_{i=1}^{n}\alpha_{i}k_{\lambda_{i}})=\sum_{i=1}^{n}\alpha_{i}{\overline{w_{i}}},
\end{align*}
which is tantamount to
\[
I_{X}(\lambda,\,w)=\inf\left\{ \Norm{\tilde{f}}{Y'}:\:f\in X,\:\tilde{f}|_{K_{\lambda}}=\tilde{k}\right\}.
\]
Consequently
\[
I_{X}(\lambda,\,w)\geq\Norm{\tilde{k}}{\left(K_{\lambda},\,\Norm{\cdot}{Y}\right)\rightarrow\mathbb{C}},
\]
where
\[
\Norm{\tilde{k}}{\left(K_{\lambda},\,\Norm{\cdot}{Y}\right)\rightarrow\mathbb{C}}:=\sup_{g\in K_{\lambda},\:g\neq0}\frac{\abs{\tilde{k}(g)}}{\Norm{g}{Y}}.
\]
According to Hahn-Banach theorem \cite[Theorem 5.16, p. 104]{RW}
since $K_{\lambda}$ is a subspace of the normed linear space $Y$
and $\tilde{k}$ is a bounded linear functional on $K_{\lambda}$,
$\tilde{k}$ can be extended to a bounded linear functional on the
whole of $Y$ having the same norm as $\tilde{k}$. In other words
there exists $\widetilde{f^{*}}\in Y'$ such that $\widetilde{f^{*}}|_{K_{\lambda}}=\tilde{k}$
and $\Norm{\widetilde{f^{*}}}{Y'}=\Norm{\tilde{k}}{\left(K_{\lambda},\,\Norm{\cdot}{Y}\right)\rightarrow\mathbb{C}}$.
Thus the infimum in $I_{X}(\lambda,\,w)$ is achieved and
\[
I_{X}(\lambda,\,w)=\Norm{\tilde{k}}{\left(K_{\lambda},\,\Norm{\cdot}{Y}\right)\rightarrow\mathbb{C}}.
\]
We note that with this simple formula the interpolation problem is, in principle, solved. We have written $I_{X}(\lambda,\,w)$ exclusively
as a function of the interpolation data, which is encoded in $K_{\lambda}$.
What remains is to write the interpolation problem in {red}{a} familiar form, e.g.~in terms of the Pick matrix.
\begin{lem}[Functional extension lemma]
\label{thm:main}Let $X$ be a Banach space of holomorphic functions on $\mathbb{D}$ whose exact predual is $Y$. Let $\lambda=(\lambda_{i})_{i=1}^{n}$
be a sequence in $\mathbb{D}$ such that $K_{\lambda}\subset Y$. Defining the functional $\tilde{k}$ on $K_{\lambda}$ by
\[
\tilde{k}(k_{\lambda_{i},\,j})={\overline{w_{i}^{(j)}}},\:\forall i=1\dots n,\;\forall j=0\dots n_{i}-1
\]
the following equality holds
\[
I_{X}(\lambda,\,w)=\Norm{\tilde{k}}{\left(K_{\lambda},\,\Norm{\cdot}{Y}\right)\rightarrow\mathbb{C}}.
\]
\end{lem}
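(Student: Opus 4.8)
The plan is to unfold the definitions exactly as the discussion preceding the lemma statement already does, and to organize the argument into three short steps. This is essentially a packaging of the computation the authors have just carried out in the running text, so the "proof" is a recapitulation made precise and extended to cover the degenerate case. No new idea should be required beyond the classical Hahn--Banach extension theorem in normed spaces; the only thing one must be slightly careful about is the duality bookkeeping (conjugation in $\langle\cdot,\cdot\rangle$ and the identification $\Norm{f}{X}=\norm{\tilde f}{Y'}$).

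First I would reduce the interpolation constraint to a statement about a functional on $K_\lambda$. Each condition $f^{(j)}(\lambda_i)=w_i^{(j)}$ is, by the reproducing-type identity $\langle f,\,k_{\lambda_i,j}\rangle=f^{(j)}(\lambda_i)$ established in Section~\ref{Section1}, equivalent to $\tilde f(k_{\lambda_i,j})=\langle k_{\lambda_i,j},f\rangle=\overline{\langle f,k_{\lambda_i,j}\rangle}=\overline{w_i^{(j)}}$. Since $K_\lambda$ is spanned by the kernels $\{k_{\lambda_i,j}\}$, the full set of interpolation conditions is exactly the assertion $\tilde f|_{K_\lambda}=\tilde k$, with $\tilde k$ the functional defined in the statement. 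Hence
\[
I_X(\lambda,w)=\inf\left\{\Norm{\tilde f}{Y'}:\ f\in X,\ \tilde f|_{K_\lambda}=\tilde k\right\}.
\]
Here I use that $f\mapsto\tilde f$ is an isometry from $X$ onto $Y'$ (the exactness hypothesis on the predual), so the infimum over $f\in X$ is the same as an infimum over all of $Y'$ satisfying the restriction constraint — this identification is the one genuinely substantive input and should be stated explicitly.

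Second, the lower bound: any $\tilde f\in Y'$ with $\tilde f|_{K_\lambda}=\tilde k$ satisfies $\Norm{\tilde f}{Y'}\ge\sup_{g\in K_\lambda,\,g\ne0}\abs{\tilde f(g)}/\Norm{g}{Y}=\Norm{\tilde k}{(K_\lambda,\Norm{\cdot}{Y})\to\mathbb C}$, because on $K_\lambda$ the two functionals agree; taking the infimum over such $\tilde f$ gives $I_X(\lambda,w)\ge\Norm{\tilde k}{(K_\lambda,\Norm{\cdot}{Y})\to\mathbb C}$. Third, the matching upper bound is precisely the Hahn--Banach theorem for normed spaces (the cited \cite[Theorem 5.16]{RW}): $K_\lambda$ is a subspace of the normed space $Y$ and $\tilde k$ is a bounded linear functional on it, since $K_\lambda$ is finite-dimensional and hence $\tilde k$ is automatically bounded; therefore $\tilde k$ extends to some $\widetilde{f^*}\in Y'$ with $\Norm{\widetilde{f^*}}{Y'}=\Norm{\tilde k}{(K_\lambda,\Norm{\cdot}{Y})\to\mathbb C}$. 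The function $f^*\in X$ corresponding to $\widetilde{f^*}$ under the isometry is an admissible interpolant whose norm realizes the lower bound, so the infimum is attained and equality holds. I do not expect a real obstacle here; the one point deserving care is making sure the degenerate kernels $k_{\lambda_i,j}$ genuinely lie in $Y$ (this is hypothesized) and that the pairing identity $\langle f,k_{\lambda_i,j}\rangle=f^{(j)}(\lambda_i)$ holds with the correct normalization of derivatives in $\overline{\lambda_i}$ — a routine check via differentiating $\langle f,k_{\lambda_i}\rangle=f(\lambda_i)$ under the (absolutely convergent, since $n$ finite and boundary behaviour regular) sum.
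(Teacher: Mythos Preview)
Your proposal is correct and follows exactly the same approach as the paper: the argument given in the running text immediately preceding the lemma statement (rewrite the interpolation constraints as $\tilde f|_{K_\lambda}=\tilde k$, deduce the lower bound trivially, and obtain the upper bound and attainment of the infimum via Hahn--Banach) is precisely what you have outlined. Your extra care with the degenerate kernels and the isometric identification $X\cong Y'$ only makes explicit what the paper leaves implicit.
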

The result asserts that given ${C}>0$ there exists $f\in X$ such that
$f^{(j)}(\lambda_{i})=w_{i}^{(j)}$ and $\Norm{f}{X}\leq {C}$ iff
\begin{align}
\Abs{\sum_{i=1}^{{n}}\sum_{j=0}^{n_{i}}\alpha_{i,j}\overline{w_{i}^{(j)}}}\leq {C}\Norm{\sum_{i=1}^{{n}}\sum_{j=0}^{n_{i}}\alpha_{i,j}k_{\lambda_{i},\,j}}{Y}\label{AF}
\end{align}
holds for any sequence of complex numbers $(\alpha_{i,j})_{i,j}$. Notice that when no degeneracy is present in the data condition~\eqref{AF}
is stucturally reminiscent to the positivity of the Pick matrix
\[
\sum_{i=1}^{n}\sum_{j=1}^{n}\frac{\overline{\alpha_{i}}\alpha_{j}\left({C}^{2}-\overline{w_{i}}w_{j}\right)}{1-\overline{\lambda_{i}}\lambda_{j}}\geq0.
\]
The main difference is that the Pick condition
is quadratic in the $\alpha_{i}$, while~\eqref{AF}
is linear. Indeed the positivity of the Pick matrix is equivalent
to a bound on the $H^{2}$ operator norm of the compressed multiplication
operator, which explains the occurrence of quadratic terms. Our next
goal is thus to rewrite~\eqref{AF} in terms of the
compressed shift operator. To work with multiplication operators
it is clear that we must endow the spaces $X$, $Y$ with additional
structure. We shall assume that:
\begin{enumerate}
\item $X$ is a unital Banach algebra (i.e.~$1\in X$ and for
all $f_{1},f_{2}\in X$ , $X$ contains the product $f_{1}f_{2}\in X$
and $\Norm{f_{1}f_{2}}{X}\leq\Norm{f_{1}}{X}\Norm{f_{2}}{X}$).
\item The exact predual $Y$ of $X$ has a division property (i.e.~$g\in Y\implies\frac{g-g(0)}{z}\in Y$).
\end{enumerate}
To keep the notation simple we write briefly $S=Mult_{z}$
for the multiplication operator on $X$, $S$ is commonly called the shift
operator. The adjoint of $S$ with respect to our
Cauchy-type duality is $S^{\star}$ defined on $Y$. $S^{\star}$
is also known as the backward shift operator and satisfies
\[
S^{\star}f=\frac{f-f(0)}{z},\qquad f\in Y.
\]
It can be checked simply that $K_{\lambda}$ is invariant with respect
to $S^{\star}$. Moreover for any $\lambda\in\mathbb{D}$ and for
any analytic polynomial $g$ we have
\[
S^{\star}k_{\lambda}=\overline{\lambda}k_{\lambda},\qquad g(S)^{\star}k_{\lambda}=\overline{g(\lambda)}k_{\lambda}.
\]
\begin{thm}
\label{thm:shift_main_algebra} Let $X\subset\mathcal{H}ol(\mathbb{D})$
be a unital Banach algebra satisfying the division property and $\lambda=(\lambda_{i})_{i=1}^{n}$
be a finite sequence in $\mathbb{D}$ whose associated Blaschke
product is denoted by $B$. For any analytic polynomial
$g$ it holds that

\[
I_{X}(\lambda,g(\lambda))=\Norm{g(S)^{\star}\vert K_{\lambda}}{Y\rightarrow Y}
\]
where $Y$ is the exact predual of $X$. \end{thm}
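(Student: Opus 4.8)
The plan is to combine the Functional extension lemma (Lemma~\ref{thm:main}) with an explicit computation of the dual pairing between $K_\lambda$ and its action under the backward shift. By Lemma~\ref{thm:main} we already know that $I_X(\lambda, g(\lambda))$ equals $\Norm{\tilde{k}}{(K_\lambda,\Norm{\cdot}{Y})\to\mathbb C}$ where $\tilde{k}(k_{\lambda_i}) = \overline{g(\lambda_i)}$ (and analogously for the degenerate kernels). So the whole task reduces to identifying this functional norm with the operator norm $\Norm{g(S)^\star\vert K_\lambda}{Y\to Y}$.

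First I would recall from the discussion preceding the theorem that $g(S)^\star k_{\lambda_i} = \overline{g(\lambda_i)}\,k_{\lambda_i}$, and more generally (differentiating this identity in $\overline{\lambda_i}$) that $g(S)^\star$ maps $K_\lambda$ into $K_\lambda$, with the matrix of $g(S)^\star\vert K_\lambda$ in the basis $\{k_{\lambda_i,j}\}$ being upper-triangular with the values $\overline{g(\lambda_i)}$ on the diagonal. The key observation is then: for $h = \sum_i \alpha_i k_{\lambda_i} \in K_\lambda$, the functional value $\tilde{k}(h) = \sum_i \alpha_i \overline{g(\lambda_i)}$ can be \emph{realized} as a pairing. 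Concretely, I would show $\tilde{k}(h) = \langle 1, g(S)^\star h\rangle$, or rather relate it to the norm of the image: since $\Norm{u}{Y} = \sup\{|\langle u, f\rangle|/\Norm{f}{X} : f\in X\}$ and $X$ is unital with $\Norm{1}{X} = 1$, one has $|\langle 1, u\rangle| \le \Norm{u}{Y}$. The cleaner route is to directly compute
\[
\Norm{g(S)^\star\vert K_\lambda}{Y\to Y} = \sup_{h\in K_\lambda,\,h\neq 0}\frac{\Norm{g(S)^\star h}{Y}}{\Norm{h}{Y}},
\]
and to match numerator and denominator against the functional-norm expression. For this I would use that $g(S)^\star h \in K_\lambda$ again, write out $\Norm{g(S)^\star h}{Y}$ as a Hahn--Banach supremum over $f\in X$, and use $\langle f, g(S)^\star h\rangle = \langle g(S) f, h\rangle = \langle gf, h\rangle$ together with $gf\in X$, $\Norm{gf}{X}\le\Norm{g}{X}\Norm{f}{X}$ --- but more importantly, that as $f$ ranges over all of $X$ so does $gf$ when... no, that inclusion fails in general, so the operator-norm interpretation is genuinely needed rather than a norm-of-functional collapse.

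The cleanest argument, which I would adopt: show the two-sided inequality. For $\ge$: given $h\in K_\lambda$ with $\Norm{h}{Y}\le 1$, we have $g(S)^\star h\in K_\lambda\subset Y$, and $\tilde{k}$ applied to... here one uses that $\tilde{k}$ is a fixed functional whose value on $h$ is the ``$\langle 1,\cdot\rangle$-coefficient'' after applying $g(S)^\star$ --- indeed writing $h=\sum\alpha_i k_{\lambda_i}$, the constant Taylor coefficient of $h$ is $\sum\alpha_i$ (since $k_{\lambda_i}(0)=1$) while $g(S)^\star h = \sum\alpha_i\overline{g(\lambda_i)}k_{\lambda_i}$ has constant coefficient $\sum\alpha_i\overline{g(\lambda_i)} = \tilde k(h)$. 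Thus $\tilde k(h) = \langle 1, g(S)^\star h\rangle$, and since $\Norm{1}{X}=1$ and $\langle 1,\cdot\rangle$ has norm $1$ on $Y$, $|\tilde k(h)| \le \Norm{g(S)^\star h}{Y} \le \Norm{g(S)^\star\vert K_\lambda}{Y\to Y}$, giving $I_X \le \Norm{g(S)^\star\vert K_\lambda}{Y\to Y}$. For the reverse $\ge$: fix $h\in K_\lambda$; we must bound $\Norm{g(S)^\star h}{Y}$ by $I_X\cdot\Norm{h}{Y}$. Since $g(S)^\star h\in K_\lambda$, by Hahn--Banach $\Norm{g(S)^\star h}{Y} = \sup\{|\langle f, g(S)^\star h\rangle| : f\in X,\ \Norm{f}{X}\le 1\} = \sup\{|\langle gf, h\rangle| : \Norm{f}{X}\le 1\}$. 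Now $gf\in X$ and its values on $\lambda_i$ are $g(\lambda_i)f(\lambda_i)$; one wants to recognize $\langle gf, h\rangle$ as an interpolation-type quantity --- using that $h\in K_\lambda$ means $\langle gf, h\rangle$ depends only on the values $\{(gf)(\lambda_i)\}$, i.e. on $\{g(\lambda_i)f(\lambda_i)\}$. The supremum over $\Norm{f}{X}\le 1$ of $|\langle gf,h\rangle|$ is then controlled: replacing $gf$ by any $\phi\in X$ with $\phi(\lambda_i)=g(\lambda_i)f(\lambda_i)$ can only decrease the norm $\Norm{\phi}{X}$ below $\Norm{gf}{X}\le 1$, and by Lemma~\ref{thm:main} (applied with data $w_i = g(\lambda_i)f(\lambda_i)$) the minimal such norm equals $\Norm{\tilde k_{gf}}{}$ which by scaling is $\le \Norm{f}{X}\cdot I_X(\lambda,g(\lambda))\cdot(\text{something})$ --- this scaling step is where I would be most careful, and I expect it to be \textbf{the main obstacle}: one needs that the functional-norm quantity behaves correctly under the substitution $w_i \mapsto g(\lambda_i)f(\lambda_i)$ versus $w_i\mapsto g(\lambda_i)$, i.e. that $\langle gf, h\rangle = \langle f, \overline{g}\,\text{-twisted }h\rangle$ and that $g(S)^\star$ captures exactly this twist. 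Unwinding, $\langle gf, h\rangle = \langle f, g(S)^\star h\rangle$ tautologically, so actually the reverse inequality $\Norm{g(S)^\star h}{Y}\le I_X\Norm{h}{Y}$ should follow from: $\Norm{g(S)^\star h}{Y} = \sup_{\Norm f X\le 1}|\langle f, g(S)^\star h\rangle|$, and for the optimal $h$ normalized appropriately this sup is attained in a way matching the Hahn--Banach extension $\widetilde{f^*}$ of $\tilde k$ from $I_X(\lambda,g(\lambda)) = \Norm{\widetilde{f^*}}{Y'}$ --- so the honest structure of the proof is: $\Norm{g(S)^\star\vert K_\lambda}{Y\to Y} = \sup_{h}\Norm{g(S)^\star h}{Y}/\Norm{h}{Y}$, bound each $\Norm{g(S)^\star h}{Y}$ using the extension $\widetilde{f^*}$ (which agrees with $\tilde k$ on $K_\lambda$ hence $\langle\widetilde{f^*}\text{-rep}, h\rangle$ reconstructs the needed pairing), and conversely use the constant-coefficient identity above. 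I would write these two inequalities carefully, flagging the division property and the $S^\star$-invariance of $K_\lambda$ as the points making $g(S)^\star h\in K_\lambda$ legitimate.
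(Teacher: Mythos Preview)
The paper does not include an explicit proof of this theorem. It records the two key facts needed---that $K_\lambda$ is $S^\star$-invariant and that $g(S)^\star k_{\lambda_i}=\overline{g(\lambda_i)}\,k_{\lambda_i}$---immediately before the statement, and evidently regards the result as a direct consequence of these together with Lemma~\ref{thm:main}. So your task is really to supply the missing two lines.

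Your overall plan (two-sided inequality, reducing to Lemma~\ref{thm:main}) is correct, and your argument for $I_X(\lambda,g(\lambda))\le \Norm{g(S)^\star\vert K_\lambda}{Y\to Y}$ is clean and complete: the identity $\tilde k(h)=\langle g(S)^\star h,\,1\rangle$ (i.e.\ the zeroth Taylor coefficient of $g(S)^\star h$) together with $\Norm{1}{X}=1$ gives $|\tilde k(h)|\le\Norm{g(S)^\star h}{Y}$, and taking the supremum over $\Norm{h}{Y}\le 1$ finishes by Lemma~\ref{thm:main}. This is exactly the kind of argument the paper's setup invites.

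The reverse inequality is where your write-up stalls. You correctly reduce $\Norm{g(S)^\star h}{Y}$ to $\sup_{\Norm{f}{X}\le 1}|\langle gf,\,h\rangle|$ and correctly note that $\langle gf,\,h\rangle$ depends only on the values (and derivatives) of $gf$ at the $\lambda_i$. But then you drift toward interpolating $gf$ with \emph{varying} targets $g(\lambda_i)f(\lambda_i)$, which leads nowhere. The missing step is simpler: let $f^*\in X$ be the optimal interpolant furnished by Lemma~\ref{thm:main}, i.e.\ $f^*(\lambda_i)=g(\lambda_i)$ (with matching derivatives in the degenerate case) and $\Norm{f^*}{X}=I_X(\lambda,g(\lambda))$. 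Then $gf$ and $f^*f$ agree on $\lambda$ to the required orders, so $\langle gf,\,h\rangle=\langle f^*f,\,h\rangle$ for every $h\in K_\lambda$. Now the Banach-algebra property does the work:
\[
|\langle f^*f,\,h\rangle|\le \Norm{f^*f}{X}\,\Norm{h}{Y}\le \Norm{f^*}{X}\,\Norm{f}{X}\,\Norm{h}{Y}= I_X(\lambda,g(\lambda))\,\Norm{h}{Y}.
\]
Taking the supremum over $\Norm{f}{X}\le 1$ yields $\Norm{g(S)^\star h}{Y}\le I_X(\lambda,g(\lambda))\,\Norm{h}{Y}$, hence $\Norm{g(S)^\star\vert K_\lambda}{Y\to Y}\le I_X(\lambda,g(\lambda))$. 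The object you call $\widetilde{f^*}$ is precisely (the functional associated to) this $f^*$; you had the right ingredient in hand but did not carry out the substitution $g\rightsquigarrow f^*$ inside the pairing.
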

\begin{rem}
\label{Remark_Piii}Regarding the case $X=H^{\infty}$: The proof of
Corollary \ref{thm:NP} below shows that
\[
\Norm{g(S)^{\star}\vert K_{\lambda}}{L^{1}/\overline{H_{0}^{1}}\rightarrow L^{1}/\overline{H_{0}^{1}}}=\Norm{g(S)^{\star}\vert K_{\lambda}}{H^{2}\rightarrow H^{2}},
\]
which is Sarason\rq s formulation of Pick's theorem \cite{PG}. This
way Lemma \ref{thm:main} applies to $X=H^{\infty}=(L^{1}/\overline{H_{0}^{1}})'$
and yields Pick's criterion.
\end{rem}
The main point of Theorem~\ref{thm:shift_main_algebra} is that it provides
a representation of the interpolation quantity in terms of the norm
of an operator. This allows an elementary comparison with Pick's classical
criterion of positive-semidefniteness but it also opens the doors
to interesting applications in matrix analysis. We apply the theorem
to obtain sharp estimates on norms of functions of matrices with given
spectrum. It is known that the interpolation quantity
can itself {be} seen as an upper estimate to the norm of a function
of a matrix. We show that the converse also holds. We begin by reviewing the known upper bound:

An operator $T$ is \textit{algebraic} if there exists an analytic
polynomial $g\neq0$ such that $g(T)=0$. We denote by $m_{T}$ its\textit{
minimal polynomial}, i.e. the unique monic polynomial annihilating
$T$ whose degree $\abs{m_{T}}$ is minimal. Given an algebraic operator
$T$ with spectrum in $\mathbb{D}$ we put $m=m_{T}=\prod_{i=1}^{\abs{m}}(z-\lambda_{i})$
with $\abs{\lambda_{i}}<1$ for $i=1\dots\abs{m}$. We assume that
there exists a unital algebra $X\subset\mathcal{H}ol(\mathbb{D})$
on which $T$ admits a functional calculus with constant ${c}>0$, that
is
\begin{equation}
\Norm{g(T)}{}\leq {c}\Norm{g}{X}\label{eq:funct_calc_on_X}
\end{equation}
for any polynomial $g$. In addition to assumptions (1) and
(2) we assume furthermore that $X$ satisfies the \textit{division
property
\[
\left[f\in X,\,\lambda\in\mathbb{D},\:{\rm and}\,f(\lambda)=0\right]\Rightarrow\left[\frac{f}{z-\lambda}\in X\right].
\]
}Following \cite{NN1} instead of considering
$g$ directly in inequality \eqref{eq:funct_calc_on_X}, we add multiples
of $m$ to this function and consider $h=g+mf$ with $f\in X$. This
leads to
\[
\norm{g\left(T\right)}{}\leq {c}I_{X}(\lambda,\,g(\lambda)).
\]
Following~\cite[Lemma III.6]{SO} we extend this inequality to
any rational function $\Psi$ whose set of poles $\{\xi_{i}\}_{i=1}^{p}$
is separated from the eigenvalues of $T$ considering the analytic
polynomial
\[
g(z)=\Psi\prod_{i=1}^{p}\left(\frac{m(\xi_{i})-m(z)}{m(\xi_{i})}\right),
\]
where all singularities are lifted and observe that $g(T)=\Psi(T$).
This gives 
\[
\norm{\Psi\left(T\right)}{}\leq {c}\Norm{g(S)^{\star}\vert K_{\lambda}}{Y\rightarrow Y}={c}\Norm{\Psi(S)^{\star}\vert K_{\lambda}}{Y\rightarrow Y}
\]
because $m(S)^{\star}\vert K_{\lambda}=0$.
\begin{cor}
\label{thm:main_3} In the setting of Theorem~\ref{thm:shift_main_algebra}, if $T$ admits a $c$ functional calculus on X then for any rational function $\Psi$ whose poles are distinct from the eigenvalues of $T$ it holds
that
\[
\norm{\Psi\left(T\right)}{}\leq {c}\Norm{\Psi(S)^{\star}\vert K_{\lambda}}{Y\rightarrow Y}.
\]
\end{cor}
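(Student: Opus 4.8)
The plan is to reduce the estimate to the polynomial case already settled by Theorem~\ref{thm:shift_main_algebra} and then to strip the poles off $\Psi$ by a classical singularity-lifting device; this is essentially the argument sketched in the paragraph preceding the statement. \emph{First, the polynomial case.} Let $g$ be an analytic polynomial. Since $X$ is a unital algebra it contains every polynomial, and for each $f\in X$ the function $h:=g+mf$ lies in $X$ with $h(T)=g(T)+m(T)f(T)=g(T)$ because $m=m_T$ annihilates $T$. Conversely, the division property forces every $h\in X$ with $h\equiv g\ (\mathrm{mod}\ m)$ to be of this shape: $h-g\in X$ vanishes to the prescribed order at each $\lambda_i$ and hence is divisible by $m$ inside $X$. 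Thus $\{h\in X:h\equiv g\ (\mathrm{mod}\ m)\}=\{g+mf:f\in X\}$, and the functional calculus hypothesis gives
\[
\|g(T)\|=\|h(T)\|\le c\,\|h\|_X\qquad\text{for }h=g+mf.
\]
Taking the infimum and noting that $h\equiv g\ (\mathrm{mod}\ m)$ is precisely Hermite interpolation of the data $g^{(j)}(\lambda_i)$, $0\le j<n_i$, we obtain $\|g(T)\|\le c\,I_X(\lambda,g(\lambda))$, and Theorem~\ref{thm:shift_main_algebra} rewrites the right-hand side as $c\,\|g(S)^\star\vert K_\lambda\|_{Y\to Y}$.

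\emph{Next, the passage to a rational $\Psi$} with poles $\xi_1,\dots,\xi_p$, none of which is a root of $m$ (i.e.\ an eigenvalue of $T$). Put
\[
g(z)=\Psi(z)\prod_{i=1}^{p}\frac{m(\xi_i)-m(z)}{m(\xi_i)},
\]
with the $i$-th factor repeated to the order of $\xi_i$ when that pole is not simple; then $g$ is an analytic polynomial, since $m(\xi_i)-m(z)$ vanishes at $z=\xi_i$ and clears the pole. Two checks finish the proof: (i) $g(T)=\Psi(T)$, because $m(T)=0$ makes every correction factor equal to the identity operator; (ii) $g\equiv\Psi\ (\mathrm{mod}\ m)$, because $g-\Psi=\Psi\cdot\bigl(\prod_i\tfrac{m(\xi_i)-m(z)}{m(\xi_i)}-1\bigr)$ and, expanding the product, every term other than the constant $1$ carries a factor $m(z)$; as $\Psi$ has no pole at any $\lambda_i$ this says $g$ and $\Psi$ have the same Hermite data at $\lambda$. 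Fact (ii) also makes $\Psi(S)^\star\vert K_\lambda:=g(S)^\star\vert K_\lambda$ well defined, independent of the chosen lift $g$, because $m(S)^\star\vert K_\lambda=0$. Combining the polynomial case with (i) and (ii),
\[
\|\Psi(T)\|=\|g(T)\|\le c\,\|g(S)^\star\vert K_\lambda\|_{Y\to Y}=c\,\|\Psi(S)^\star\vert K_\lambda\|_{Y\to Y}.
\]

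The heavy lifting is already done by Theorem~\ref{thm:shift_main_algebra}, so I do not foresee a real obstacle. The one spot demanding care is the lifting in the second step: the powers of the correction factors must be large enough to clear \emph{all} poles of $\Psi$, while in the degenerate case one must also keep track of the derivative data at the repeated nodes $\lambda_i$ so that $g(T)=\Psi(T)$ and $g\equiv\Psi\ (\mathrm{mod}\ m)$ both persist. This is routine bookkeeping, which is why the corollary is all but immediate from the discussion preceding it.
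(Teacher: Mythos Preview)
Your proposal is correct and follows essentially the same route as the paper: first the polynomial estimate $\|g(T)\|\le c\,I_X(\lambda,g(\lambda))$ via $h=g+mf$ combined with Theorem~\ref{thm:shift_main_algebra}, then the singularity-lifting polynomial $g(z)=\Psi(z)\prod_i\frac{m(\xi_i)-m(z)}{m(\xi_i)}$ together with $m(S)^\star\vert K_\lambda=0$. Your version is slightly more explicit (repeating factors for higher-order poles, spelling out $g\equiv\Psi\ (\mathrm{mod}\ m)$), but the argument is the same.
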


Notice that the right hand side only depends on the norm on $Y$ and
the minimal polynomial of $T$. The bound is optimal since equality
is achieved for the compression of any multiplication operator to
$K_{\lambda}$. We formulate a corollary of the theorem for matrices.
This is achieved simply by identifying $\mathbb{C}^{\abs{m}}\cong K_{\lambda}$
and introducing an orthonormal basis of $K_{\lambda}$.

Let ${\mathcal{M}_{n}(\mathbb{C})}$ be the set of $n\times n$ complex matrices and ${M}\in {\mathcal{M}_{n}(\mathbb{C})}$ with minimal polynomial $m=m_{{M}}$. Given any particular norm $\left|\cdot\right|$ on $\mathbb{C}^{n}$ we consider the corresponding operator norm of ${M}$: $\Norm{{M}}{} =\Norm{{M}}{\left(\mathbb{C}^{n},\,\left|\cdot\right|\right)\rightarrow\left(\mathbb{C}^{n},\,\left|\cdot\right|\right)}$. We introduce a norm $\abs{\cdot}_{*}$ on $\mathbb{C}^{\abs{m}}\cong K_{\lambda}$ by
\[
\abs{{\vec{x}}}_{*}:=\norm{\sum_{j=1}^{\abs{m}}x_{j}e_{j}}{Y}
\]
 where
\begin{align*}
e_{1}=\frac{(1-\vert\lambda_{1}\vert^{2})^{1/2}}{1-\bar{\lambda}_{1}z},\qquad e_{j}:=\frac{(1-\vert\lambda_{k}\vert^{2})^{1/2}}{1-\bar{\lambda}_{j}z}\prod_{i=1}^{j-1}b_{\lambda_{i}} & ,\qquad j=1\dots\abs{m}
\end{align*}
is the \textit{Malmquist-Walsh family}: a particular orthonormal basis
for $K_{\lambda}$ \cite[p. 137]{NN3}.  We denote by $\Norm{\cdot}{*}$
the matrix norm induced by $\abs{\cdot}_{*}$.
\begin{cor}
\label{gen_of_Sarason_thm} Let ${M}\in{\mathcal{M}_{n}(\mathbb{C})}$ be
a complex $n\times n$ matrix, with minimal polynomial $m=\prod_{i=1}^{\abs{m}}(z-\lambda_{i}),\,\lambda_{i}\in\mathbb{D}$, and such that $M$ admits a $c$ functional calculus on X.
If $\Psi$ be any rational function whose poles are distinct from the zeroes of $m$ then
it holds
\[
\Norm{\Psi\left({M}\right)}{}\leq c\norm{\Psi(\hat{M}_{z})^\star}{*},
\]
where
\[
\left(\hat{M}_{z}\right)_{ij}=\begin{cases}
\qquad\qquad\qquad{0}\  & if\ i<j\\
\qquad\qquad\qquad\lambda_{i}\  & if\ i=j\\
(1-\abs{\lambda_{i}}^{2})^{1/2}(1-\abs{\lambda_{j}}^{2})^{1/2}\prod_{\mu=j+1}^{i-1}\left(-\bar{\lambda}_{\mu}\right)\  & if\ i>j.
\end{cases}
\]

\end{cor}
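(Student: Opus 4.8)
The plan is to reduce Corollary~\ref{gen_of_Sarason_thm} to Corollary~\ref{thm:main_3} by transporting everything through the explicit isometric identification $\mathbb{C}^{\abs{m}}\cong K_{\lambda}$ afforded by the Malmquist--Walsh basis. First I would observe that the map $\iota\colon\vec{x}\mapsto\sum_{j=1}^{\abs m}x_j e_j$ is, by the very definition of $\abs{\cdot}_*$, an isometry from $(\mathbb{C}^{\abs m},\abs{\cdot}_*)$ onto $(K_\lambda,\norm{\cdot}{Y})$; consequently for any operator $R$ on $K_\lambda$ one has $\norm{R}{Y\to Y}=\norm{\iota^{-1}R\,\iota}{*}$, where the right-hand norm is the operator norm on $(\mathbb{C}^{\abs m},\abs{\cdot}_*)$. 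Applying this with $R=\Psi(S)^\star\vert K_\lambda$, Corollary~\ref{thm:main_3} gives
\[
\norm{\Psi(M)}{}\le c\,\norm{\Psi(S)^\star\vert K_\lambda}{Y\to Y}=c\,\norm{\iota^{-1}\bigl(\Psi(S)^\star\vert K_\lambda\bigr)\iota}{*},
\]
so the whole content of the corollary is the identity of matrices $\iota^{-1}\bigl(\Psi(S)^\star\vert K_\lambda\bigr)\iota=\Psi(\hat M_z)^\star$, i.e. that the matrix of $S^\star\vert K_\lambda$ in the Malmquist--Walsh basis is exactly $\hat M_z$ (and hence that of $g(S)^\star\vert K_\lambda$ is $g(\hat M_z)^\star$, since the functional calculus and the adjoint both respect the basis identification).

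The core computation is therefore: compute $\langle S^\star e_j,\, e_i\rangle$ for the Cauchy-type duality and check it equals $(\hat M_z)_{ij}$ — wait, one must be slightly careful about which of $S$ or $S^\star$ carries the matrix. Since $e_i$ is an orthonormal basis of $K_\lambda$ under $\langle\cdot,\cdot\rangle$, the matrix entries of the operator $S\vert K_B$ (the compressed shift $P_B\circ Mult_z$) are $(\hat M_z)_{ij}=\langle S e_j, e_i\rangle$, and then $\Psi(S)^\star$ has matrix $\Psi(\hat M_z)^\star$. So the step to carry out is the classical computation of the compressed-shift matrix in the Malmquist--Walsh basis. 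I would do this inductively: using $z e_j = b_{\lambda_j}\cdot\frac{(1-\abs{\lambda_j}^2)^{1/2}}{1-\bar\lambda_j z}\prod_{i<j}b_{\lambda_i}$ and the identity $b_{\lambda_j}(z)\cdot\frac{1}{1-\bar\lambda_j z}=\frac{z-\lambda_j}{(1-\bar\lambda_j z)^2}$, together with $\frac{z-\lambda_j}{1-\bar\lambda_j z}=\frac{1}{\bar\lambda_j}\bigl(1-\frac{1-\abs{\lambda_j}^2}{1-\bar\lambda_j z}\bigr)$ when $\lambda_j\neq0$, one expresses $z e_j$ as a combination of $e_1,\dots,e_{j+1}$ plus a term in $B H^2=\ker P_B$; projecting away the last term and reading off coefficients produces precisely the stated lower-triangular-with-diagonal-$\lambda_i$ pattern with off-diagonal entries $(1-\abs{\lambda_i}^2)^{1/2}(1-\abs{\lambda_j}^2)^{1/2}\prod_{\mu=j+1}^{i-1}(-\bar\lambda_\mu)$. (The degenerate case $\lambda_j=0$ is handled separately but is easier, and in fact the final formula is continuous in the $\lambda_i$, so it extends by density.) This computation is standard and appears in the literature, e.g.\ \cite[p.~137]{NN3}, so I would cite it rather than reproduce it in full.

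Finally I would assemble the pieces: by the matrix identity just established, $\iota^{-1}\bigl(\Psi(S)^\star\vert K_\lambda\bigr)\iota = \Psi(\hat M_z)^\star$ as elements of $\mathcal{M}_{\abs m}(\mathbb{C})$, hence $\norm{\Psi(S)^\star\vert K_\lambda}{Y\to Y}=\norm{\Psi(\hat M_z)^\star}{*}$ by the isometry, and combining with the bound from Corollary~\ref{thm:main_3} yields $\Norm{\Psi(M)}{}\le c\,\norm{\Psi(\hat M_z)^\star}{*}$, which is the claim. The only genuine obstacle is bookkeeping: keeping straight the three conventions (the Cauchy-type duality versus genuine inner products, the adjoint $S\leftrightarrow S^\star$, and the transpose implicit in passing from a functional's action to a matrix) so that the conjugates and the transposition in $\Psi(\hat M_z)^\star$ land correctly — the Malmquist--Walsh matrix computation itself is routine once the conventions are fixed, and the reduction to Corollary~\ref{thm:main_3} is immediate given the isometry.
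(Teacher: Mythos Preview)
Your approach is correct and matches the paper's: both reduce to Corollary~\ref{thm:main_3} via the isometric identification $\mathbb{C}^{\abs m}\cong K_\lambda$ through the Malmquist--Walsh basis, and both defer the computation of the compressed-shift matrix to the literature (the paper cites \cite[Proposition~III.5]{SO} rather than \cite{NN3}). One small slip in your sketch: $P_B(ze_j)$ is a combination of $e_j,\dots,e_{\abs m}$ (not $e_1,\dots,e_{j+1}$), consistent with the lower-triangular form of $\hat M_z$, but since you ultimately cite the computation this does not affect the argument.
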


Observe that $\hat{M}_{z}$ is the matrix of $M_{z}\vert K_{\lambda}$ with respect to the \textit{Malmquist-Walsh basis} $(e_{i})_{i=1}^{\abs{m}}$. Its entries are computed in \cite[Proposition III.5]{SO}. The above corollary provides the theoretical foundation for efficient computation of sharp upper estimates to norms of rational functions of matrices. It says that for given spectrum for any norm and any rational function a sharp upper estimate is given in terms of the $*$-norm of the matrix $\Psi(\hat{M}_{z})^\star$. The quantity $\norm{\Psi(\hat{M}_{z})^\star}{*}$ can be computed with less effort as compared to the interpolation quantity $I_{X}$, which involves an optimization over an infinite set.

We recall that  every Banach space contraction admits a functional calculus on the Wiener algebra $W$ with constant ${c}=1$. Indeed given $M\in\mathcal{M}_{n}(\mathbb{C})$ such that $\norm{M}{}\leq1$ for some induced matrix norm $\Norm{\cdot}{}$ and $g(z)=\sum_{j=0}^{d}\hat{g}(j)z^{j}$ we have
\[
\norm{g(M)}{}\leq\sum_{j=0}^{d}\abs{\hat{g}(j)}\norm{M^{k}}{}\leq\norm{g}{W}.
\]
Corollary~\ref{gen_of_Sarason_thm} applied to $X=W=l_{A}^{1}(0)$, whose exact predual is equipped with the norm 
\norm{\cdot}{l_{A}^{\infty}}, yields
\[
\Norm{\Psi\left({M}\right)}{}\leq \norm{\Psi(\hat{M}_{z})^\star}{l_{A}^{\infty}\rightarrow l_{A}^{\infty}}.
\]
Based on this, the authors \cite{SZ1} have recently provided an explicit class of counterexamples to Sch\"{a}ffer's conjecture \cite{SJ} about norms of inverses.

\section{\label{sec:Applications_of_1st_Th} Applications}

We illustrate the application of Lemma \ref{thm:main} for some specific choices of $X$ including the cases $X=H^2,H^{\infty},W$ and more generally $X=l_{A}^{q}(-\beta)$.

\begin{cor}
\label{thm:InterpNP_l_p} In the setting of Lemma \ref{thm:main} there exists $f\in l_{A}^{q}(-\beta)$
such that $f(\lambda_{i})=w_{i}$ and $\Norm{f}{l_{A}^{q}(-\beta)}\leq C$
iff
\[
\Abs{\sum_{i=1}^{n}\alpha_{i}{w_{i}}}\leq {C}\left(\Abs{\sum_{i=1}^{n}\alpha_{i}}^{p}+\sum_{j\geq1}j^{\beta p}\Abs{\sum_{i=1}^{n}\alpha_{i}{\lambda_{i}^{j}}}^{p}\right)^{1/p}
\]
holds for any sequence of complex numbers $(\alpha_{i})_{i=1}^{n}$.
\end{cor}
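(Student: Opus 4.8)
The plan is to specialize Lemma~\ref{thm:main}---or rather its reformulation~\eqref{AF}---to $X=l_{A}^{q}(-\beta)$ and to compute by hand the predual norm that appears there. Since only the values $f(\lambda_{i})=w_{i}$ are prescribed, \eqref{AF} says that an interpolant $f\in l_{A}^{q}(-\beta)$ with $\Norm{f}{l_{A}^{q}(-\beta)}\leq C$ exists if and only if
\[
\Abs{\sum_{i=1}^{n}\alpha_{i}\overline{w_{i}}}\leq C\,\Norm{\sum_{i=1}^{n}\alpha_{i}k_{\lambda_{i}}}{Y}
\]
for every family $(\alpha_{i})_{i=1}^{n}\subset\mathbb{C}$, where $Y$ is the exact predual of $X$. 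First I would recall, as noted in Section~\ref{Section1}, that this exact predual is $l_{A}^{p}(\beta)$ with $\tfrac1p+\tfrac1q=1$, and that $K_{\lambda}\subset l_{A}^{p}(\beta)$ holds automatically, since the Taylor coefficients $\overline{\lambda_{i}}^{\,j}$ of the Cauchy kernels decay geometrically and therefore lie in every weighted $l^{p}$-space with polynomial weight.

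Next I would evaluate the right-hand side. Writing $k_{\lambda_{i}}=\sum_{j\geq0}\overline{\lambda_{i}}^{\,j}z^{j}$ gives $\sum_{i}\alpha_{i}k_{\lambda_{i}}=\sum_{j\geq0}\bigl(\sum_{i}\alpha_{i}\overline{\lambda_{i}}^{\,j}\bigr)z^{j}$, so by the very definition of $\Norm{\cdot}{l_{A}^{p}(\beta)}$ (with weights $\omega_{0}=1$, $\omega_{j}=j^{\beta}$) one has
\[
\Norm{\sum_{i}\alpha_{i}k_{\lambda_{i}}}{Y}=\Bigl(\Abs{\sum_{i}\alpha_{i}}^{p}+\sum_{j\geq1}j^{\beta p}\Abs{\sum_{i}\alpha_{i}\overline{\lambda_{i}}^{\,j}}^{p}\Bigr)^{1/p},
\]
read as a supremum in the endpoint case $q=1$, $p=\infty$. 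Substituting this back produces the asserted inequality, except that complex conjugates are still attached to the $w_{i}$ and the $\lambda_{i}$.

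To reach the statement verbatim I would finally exploit that the condition is required for \emph{all} complex families $(\alpha_{i})_{i=1}^{n}$: replacing $\alpha_{i}$ by $\overline{\alpha_{i}}$ (a bijection of $\mathbb{C}^{n}$) turns $\sum_{i}\alpha_{i}\overline{w_{i}}$ into $\overline{\sum_{i}\alpha_{i}w_{i}}$ and $\sum_{i}\alpha_{i}\overline{\lambda_{i}}^{\,j}$ into $\overline{\sum_{i}\alpha_{i}\lambda_{i}^{j}}$ while leaving $\Abs{\sum_{i}\alpha_{i}}$ untouched; since only moduli occur, this is exactly the stated criterion. I do not anticipate a real obstacle here: the only genuine points of care are the correct identification of the \emph{exact} predual with the accompanying weight-sign flip $\beta\mapsto-\beta$ (which is what makes the Hahn--Banach step in Lemma~\ref{thm:main} an equality, and is already built into its hypotheses), the geometric-decay remark that puts $K_{\lambda}$ inside $Y$, and the small amount of conjugate bookkeeping at the end; the endpoint $q=1$ deserves an explicit word, since there the $l_{A}^{\infty}(\beta)$-norm is a supremum rather than a sum.
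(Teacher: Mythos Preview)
Your proposal is correct and follows exactly the approach the paper intends: the corollary is treated as an immediate specialization of Lemma~\ref{thm:main} (in its reformulation~\eqref{AF}) to $X=l_{A}^{q}(-\beta)$, using the identification of the predual norm as $\Norm{\cdot}{l_{A}^{p}(\beta)}$ recorded in Section~\ref{Section1} and the explicit Taylor expansion of the Cauchy kernels. The paper does not spell out the conjugate bookkeeping or the verification that $K_{\lambda}\subset Y$, but your treatment of those points is faithful to the implicit argument.
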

This reduces the interpolation problem on the infinite-dimensional
Beurling-Sobolev space to an optimization task over a finite-dimensional space
of rational functions $K_{\lambda}={\rm span}\{k_{\lambda_{i}}:\:i=1\dots n\}$, which is much better accessible to computers.
When $X$ is the Hardy space $H^{2}=l_{A}^{2}(0)$ this formula can be simplified. Then it holds that
\[
\Norm{\sum_{i=1}^{n}\alpha_{i}k_{\lambda_{i}}}{H^{2}}^{2}=\sum_{i=1}^{n}\sum_{j=1}^{n}\alpha_{i}\overline{\alpha_{j}}\left\langle k_{\lambda_{i}},\,k_{\lambda_{j}}\right\rangle =\sum_{i=1}^{n}\sum_{j=1}^{n}\frac{\alpha_{i}\overline{\alpha_{j}}}{1-\overline{\lambda_{i}}\lambda_{j}}
\]
and the above condition reduces to
\[
\sum_{i=1}^{n}\sum_{j=1}^{n}\frac{\overline{\alpha_{i}}\alpha_{j}\left({C}^{2}-\overline{w_{i}}w_{j}(1-\overline{\lambda_{i}}\lambda_{j})\right)}{1-\overline{\lambda_{i}}\lambda_{j}}\geq0.
\]

If in addition $X$ is a Banach algebra the optimization task on $K_{\lambda}$ is a consequence of the representation of $I_{X}(\lambda,\,w)$ in terms of the backward shift operator $S^{\star}$ in Theorem~2.

In the case $X=H^{\infty}$ classical theorem by
G.~Pick follows from Remark~\ref{Remark_Piii}.
\begin{cor}[G. Pick \cite{PG}, R. Nevanlinna \cite{NR1,NR2}]
\label{thm:NP} Let ${\left(\lambda_{i}\right)_{i=1}^{n}}$ be a sequence
of distinct points in $\mathbb{D}$ and ${\left(w_{i}\right)_{i=1}^{n}}$ be a
sequence in $\mathbb{C}$. There exists $f\in H^{\infty}$
such that $f(\lambda_{i})=w_{i}$ and $\Norm{f}{H^{\infty}}\leq {C}$
if and only if
\[
\sum_{i=1}^{n}\sum_{j=1}^{n}\frac{\overline{\alpha_{i}}\alpha_{j}\left({C}^{2}-\overline{w_{i}}w_{j}\right)}{1-\overline{\lambda_{i}}\lambda_{j}}\geq0
\]
for any sequence of complex numbers  ${\left(\alpha_{i}\right)_{i=1}^{n}}$ .
\end{cor}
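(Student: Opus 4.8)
The plan is to deduce Corollary~\ref{thm:NP} from Remark~\ref{Remark_Piii} together with Theorem~\ref{thm:shift_main_algebra} and the classical characterization of positivity of the Pick matrix via the compressed shift. First I would record the chain of identifications already assembled in the text: applying Lemma~\ref{thm:main} to $X=H^{\infty}=(L^{1}/\overline{H_{0}^{1}})'$ gives
\[
I_{H^{\infty}}(\lambda,w)=\Norm{\tilde{k}}{(K_{\lambda},\Norm{\cdot}{L^{1}/\overline{H_{0}^{1}}})\to\mathbb{C}},
\]
and the argument surrounding Theorem~\ref{thm:shift_main_algebra} (which applies since $H^{\infty}$ is a unital Banach algebra whose predual $L^{1}/\overline{H_{0}^{1}}$ has the division property) rewrites this as $\Norm{g(S)^{\star}\vert K_{\lambda}}{Y\to Y}$ with $Y=L^{1}/\overline{H_{0}^{1}}$, where $g$ is any polynomial with $g(\lambda_{i})=w_{i}$. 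By Remark~\ref{Remark_Piii} the norm on the predual $L^{1}/\overline{H_{0}^{1}}$ agrees with the $H^{2}$ operator norm:
\[
I_{H^{\infty}}(\lambda,w)=\Norm{g(S)^{\star}\vert K_{\lambda}}{H^{2}\to H^{2}}.
\]

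Next I would translate the statement ``there exists $f\in H^{\infty}$ with $f(\lambda_{i})=w_{i}$ and $\Norm{f}{H^{\infty}}\leq C$'' into ``$I_{H^{\infty}}(\lambda,w)\leq C$''; this is immediate from the definition of $I_{H^{\infty}}$, using that the infimum is attained (as established in the discussion preceding Lemma~\ref{thm:main}). So the claim reduces to the equivalence
\[
\Norm{g(S)^{\star}\vert K_{\lambda}}{H^{2}\to H^{2}}\leq C
\quad\Longleftrightarrow\quad
\Bigl[\tfrac{\overline{\alpha_{i}}\alpha_{j}(C^{2}-\overline{w_{i}}w_{j})}{1-\overline{\lambda_{i}}\lambda_{j}}\Bigr]\ \text{summing to}\ \geq 0.
\]
For the last step I would invoke the standard computation: on $K_{\lambda}=\mathrm{span}\{k_{\lambda_{i}}\}$ with $\lambda_{i}$ distinct, the reproducing kernels satisfy $g(S)^{\star}k_{\lambda_{i}}=\overline{g(\lambda_{i})}k_{\lambda_{i}}=\overline{w_{i}}k_{\lambda_{i}}$ (the eigenvector relation recorded just before Theorem~\ref{thm:shift_main_algebra}). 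Hence $\Norm{g(S)^{\star}\vert K_{\lambda}}{}\leq C$ iff $C^{2}\mathrm{Id}-g(S)^{\star}(g(S)^{\star})^{*}\geq 0$ on $K_{\lambda}$, and evaluating this positive-semidefiniteness condition in the (non-orthogonal) basis $\{k_{\lambda_{i}}\}$ using $\langle k_{\lambda_{i}},k_{\lambda_{j}}\rangle_{H^{2}}=\frac{1}{1-\overline{\lambda_{i}}\lambda_{j}}$ produces exactly the displayed Pick form; this is the elementary equivalence already cited as \cite{NN2} in the introduction.

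The only real content is the identity $\Norm{g(S)^{\star}\vert K_{\lambda}}{L^{1}/\overline{H_{0}^{1}}\to L^{1}/\overline{H_{0}^{1}}}=\Norm{g(S)^{\star}\vert K_{\lambda}}{H^{2}\to H^{2}}$, which I would prove as the substance of the corollary. The inequality $\Norm{g(S)^{\star}\vert K_{\lambda}}{L^{1}/\overline{H_{0}^{1}}}\leq\Norm{g(S)^{\star}\vert K_{\lambda}}{H^{2}}$ can be seen from the classical Sarason/Nehari duality: a functional on $K_{\lambda}$ bounded by $C$ in the $L^{1}/\overline{H_{0}^{1}}$-norm extends to an $H^{\infty}$ function of norm $\leq C$, whose compressed multiplication operator has $H^{2}$-operator norm $\leq C$, and conversely any $H^{2}$-bounded operator on $K_{\lambda}$ commuting with the compressed shift (in particular $g(S)^{\star}\vert K_{\lambda}$, which commutes with $S^{\star}\vert K_{\lambda}$) arises this way with matching norm by Sarason's theorem. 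The hard part is thus precisely to make this two-sided comparison rigorous, i.e. to check that the general Banach-space machinery of Theorem~\ref{thm:shift_main_algebra} collapses, in the case $X=H^{\infty}$, to Sarason's Hilbert-space commutant lifting; once that identification is in place the Pick criterion drops out by the routine linear-algebra manipulation above. I would present the two-sided comparison as the core of the proof and relegate the positivity rewriting to a one-line citation of \cite{NN2}.
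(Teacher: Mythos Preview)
Your outline is mathematically valid but relies on Sarason's commutant lifting theorem to close the key step, which is exactly what the paper is designed to avoid. You correctly identify that everything reduces to the identity
\[
\Norm{g(S)^{\star}\vert K_{\lambda}}{L^{1}/\overline{H_{0}^{1}}\to L^{1}/\overline{H_{0}^{1}}}=\Norm{g(S)^{\star}\vert K_{\lambda}}{H^{2}\to H^{2}},
\]
but your proposed proof of the nontrivial inequality (``conversely \dots by Sarason's theorem'') simply imports the result $I_{H^{\infty}}(\lambda,w)=\Norm{M_{f}}{H^{2}\to H^{2}}$ from \cite{SD}. Since Sarason's theorem already yields Pick's criterion in one line, routing the argument through Theorem~\ref{thm:shift_main_algebra} and Remark~\ref{Remark_Piii} this way adds nothing; it is not circular, but it defeats the purpose of the paper's dual-space approach. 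Note also that Remark~\ref{Remark_Piii} explicitly says the identity is a \emph{consequence} of the proof of Corollary~\ref{thm:NP}, not an input to it.

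The paper's proof takes a genuinely different route that stays within the dual-space framework. It splits into two steps. The easy direction (lower bound) is a direct Cauchy--Schwarz estimate showing $I_{H^{\infty}}(\lambda,w)\geq\Norm{\sum\alpha_{i}\overline{w_{i}}k_{\lambda_{i}}}{H^{2}}/\Norm{\sum\alpha_{i}k_{\lambda_{i}}}{H^{2}}$. For the hard direction (upper bound), the paper starts from Lemma~\ref{thm:main}, which expresses $I_{H^{\infty}}(\lambda,w)$ as the smallest $C$ for which $\abs{\sum\alpha_{i}\overline{w_{i}}}\leq C\,\Norm{\sum\alpha_{i}k_{\lambda_{i}}+\overline{zg}}{L^{1}}$ holds for all $g\in H^{1}$. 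It then rewrites the pairing using the Blaschke product $B$, applies the Riesz $H^{1}$-factorization $\varphi_{0}=f_{1}f_{2}$ with $\abs{f_{1}}=\abs{f_{2}}=\abs{\varphi_{0}}^{1/2}$ on the circle (adapting the computation from \cite[Lemma~2.1]{SD}, but as a self-contained factorization argument, not as an appeal to commutant lifting), projects each factor into $K_{\lambda}$, and finishes with Cauchy--Schwarz in $H^{2}$. This yields the Pick inequality directly in the $L^{1}/\overline{H_{0}^{1}}$ duality, and the identity of Remark~\ref{Remark_Piii} then falls out as a by-product rather than being assumed.
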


If $X$ is the Wiener algebra $W$ the following characterization
follows directly from Theorem \ref{thm:shift_main_algebra}.
\begin{cor}
\label{thm:InterpNP_W} Let ${\left(\lambda_{i}\right)_{i=1}^{n}}$ be a
sequence of distinct points in $\mathbb{D}$ and ${\left(w_{i}\right)_{i=1}^{n}}$
be a sequence in $\mathbb{C}$. There exists $f\in W$
such that $f(\lambda_{i})=w_{i}$ and $\Norm{f}{W}\leq {C}$
if and only if
\[
\sup_{j\geq0}\Abs{\sum_{i=1}^{n}\alpha_{i}{w_{i}}{\lambda_{i}^{j}}}\leq {C}\sup_{j\geq0}\Abs{\sum_{i=1}^{n}\alpha_{i}{\lambda_{i}^{j}}}
\]
for any sequence of complex numbers ${\left(\alpha_{i}\right)_{i=1}^{n}}$. 
\end{cor}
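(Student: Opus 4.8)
The plan is to specialise Theorem~\ref{thm:shift_main_algebra} to $X=W=l_{A}^{1}(0)$. First I would check that its hypotheses are met. The space $W$ is a unital Banach algebra: $1\in W$, and $\Norm{f_{1}f_{2}}{W}\leq\Norm{f_{1}}{W}\Norm{f_{2}}{W}$ follows from the triangle inequality applied to the Cauchy product of two absolutely convergent series. Its exact predual is $Y=l_{A}^{\infty}$ (the closure of the analytic polynomials in the space of holomorphic functions with bounded Taylor coefficients, normed by the supremum of the moduli of the coefficients), and $Y$ has the division property since $S^{\star}$ merely deletes the constant term and shifts the coefficient sequence, so $\Norm{S^{\star}g}{l_{A}^{\infty}}\leq\Norm{g}{l_{A}^{\infty}}$. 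Finally $K_{\lambda}\subset Y$, because the $j$-th Taylor coefficient of $k_{\lambda_{i}}$ equals $\overline{\lambda_{i}}^{j}$, which tends to $0$ as $j\to\infty$.

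Next, given the data $(w_{i})_{i=1}^{n}$, I would choose by Lagrange interpolation an analytic polynomial $g$ with $g(\lambda_{i})=w_{i}$ for all $i$, so that $I_{W}(\lambda,w)=I_{W}(\lambda,g(\lambda))$. Theorem~\ref{thm:shift_main_algebra} then gives
\[
I_{W}(\lambda,w)=\Norm{g(S)^{\star}\vert K_{\lambda}}{Y\rightarrow Y}.
\]
To evaluate the right-hand side I would use the identity $g(S)^{\star}k_{\lambda_{i}}=\overline{g(\lambda_{i})}k_{\lambda_{i}}=\overline{w_{i}}k_{\lambda_{i}}$ recorded before the theorem, so that for $h=\sum_{i}\alpha_{i}k_{\lambda_{i}}\in K_{\lambda}$ one has $g(S)^{\star}h=\sum_{i}\alpha_{i}\overline{w_{i}}k_{\lambda_{i}}$, whose $j$-th Taylor coefficient is $\sum_{i}\alpha_{i}\overline{w_{i}}\,\overline{\lambda_{i}}^{j}$. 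Hence $\Norm{g(S)^{\star}h}{Y}=\sup_{j\geq0}\Abs{\sum_{i}\alpha_{i}\overline{w_{i}}\,\overline{\lambda_{i}}^{j}}$ and $\Norm{h}{Y}=\sup_{j\geq0}\Abs{\sum_{i}\alpha_{i}\overline{\lambda_{i}}^{j}}$, which yields
\[
I_{W}(\lambda,w)=\sup_{\alpha\neq0}\frac{\sup_{j\geq0}\Abs{\sum_{i=1}^{n}\alpha_{i}\overline{w_{i}}\,\overline{\lambda_{i}}^{j}}}{\sup_{j\geq0}\Abs{\sum_{i=1}^{n}\alpha_{i}\overline{\lambda_{i}}^{j}}}.
\]
Substituting $\beta_{i}=\overline{\alpha_{i}}$ (which again ranges over all nonzero finite sequences) and pulling a complex conjugate out of each absolute value transforms this into
\[
I_{W}(\lambda,w)=\sup_{\beta\neq0}\frac{\sup_{j\geq0}\Abs{\sum_{i=1}^{n}\beta_{i}w_{i}\lambda_{i}^{j}}}{\sup_{j\geq0}\Abs{\sum_{i=1}^{n}\beta_{i}\lambda_{i}^{j}}},
\]
the denominator being nonzero for $\beta\neq0$ since the $k_{\lambda_{i}}$ are linearly independent for distinct $\lambda_{i}$.

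Finally I would invoke the fact established in the paragraph preceding Lemma~\ref{thm:main} that the infimum defining $I_{W}(\lambda,w)$ is attained; consequently there exists $f\in W$ with $f(\lambda_{i})=w_{i}$ and $\Norm{f}{W}\leq C$ exactly when $I_{W}(\lambda,w)\leq C$, which by the last display is equivalent to the asserted inequality holding for every finite sequence $(\alpha_{i})_{i=1}^{n}$ (the case $\beta=0$ being trivial). I do not expect a genuine obstacle: all the substance lies in Theorem~\ref{thm:shift_main_algebra}, and what remains is the verification of its hypotheses for $W$ and careful bookkeeping of the conjugations forced by the Cauchy-type duality so that $w_{i}$ and $\lambda_{i}^{j}$ appear unconjugated in the statement. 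Alternatively, one could start from condition~\eqref{AF} of Lemma~\ref{thm:main}, which directly produces only the $j=0$ term $\Abs{\sum_{i}\alpha_{i}w_{i}}$ on the left-hand side; recovering the full supremum over $j$ then requires the additional observation that $K_{\lambda}$ is $S^{\star}$-invariant, i.e.\ that replacing $\alpha_{i}$ by $\alpha_{i}\lambda_{i}^{j}$ is admissible in~\eqref{AF}.
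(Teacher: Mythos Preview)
Your proposal is correct and follows exactly the route the paper indicates: the corollary is stated as a direct consequence of Theorem~\ref{thm:shift_main_algebra}, and you have carefully supplied the verification of its hypotheses for $X=W$, the computation of the $l_{A}^{\infty}$-norm on $K_{\lambda}$, and the conjugation bookkeeping. Your alternative via condition~\eqref{AF} together with the $S^{\star}$-invariance of $K_{\lambda}$ is also valid and shows, incidentally, that the seemingly stronger inequality with $\sup_{j\geq0}$ on the left is equivalent to the $j=0$ instance of~\eqref{AF}.
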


\section{\label{sec:Interpolation-on-a}Interpolation on a sequence with accumulation point}
For finite data Lemma \ref{thm:main} says that given $C>0$ there exists $f\in X$ such that $f(\lambda_{i})=w_{i}$ and $\Norm{f}{X}\leq C$ iff 
\[
\Abs{\sum_{i=1}^{n}\alpha_{i}\overline{f(\lambda_{i})}}\leq C\Norm{\sum_{i=1}^{n}\alpha_{i}k_{\lambda_{i}}}{Y}
\]
for any sequence of complex numbers ${\left(\alpha_{i}\right)_{i=1}^{n}}$.
If $\lambda=(\lambda_{i})_{i\geq1}$ is an infinite sequence of distinct points which contains an accumulation point in $\mathbb{D}$ then $f$ exists and is unique.
Furthermore $f$ satisfies the above inequality for any sequence $(z_{i})_{i}$
in $\mathbb{D}$. We state this result in the case that $X$ is a Beurling-Sobolev
space $X=l_{A}^{q}(-\beta),$ $1\leq q\leq\infty$, $\beta\in\mathbb{R}$.
The proof, which we {sketch below}, is an adaptation of \cite[Lemma 4.1]{FH} and of \cite[Theorem 4.2]{FH}.
\begin{thm}
\label{thm:accum_pt_l_p_spaces} Let $\lambda=(\lambda_{i})_{i\geq1}$
be a sequence of distinct points with an accumulation point
in $\mathbb{D}$, $w=(w_{i})_{i\geq1}$ be a sequence in $\mathbb{C}$
and ${C}>0$. Let $X=l_{A}^{q}(-\beta)$ and $p, q$
be conjugate exponents, $\frac{1}{p}+\frac{1}{q}=1$. If
\begin{equation}
\Abs{\sum_{i=1}^{n}\alpha_{i}{w_{i}}}\leq {C}\left(\Abs{\sum_{i=1}^{n}\alpha_{i}}^{p}+\sum_{j\geq1}j^{\beta p}\Abs{\sum_{i=1}^{n}\alpha_{i}{\lambda_{i}^{j}}}^{p}\right)^{1/p}\label{eq:assumpt_ineq_acc_pt}
\end{equation}
for any $n\geq1$ and any sequence of complex numbers $(\alpha_{i})_{i\geq1}$,
then there exists a unique function $f\in l_{A}^{q}(-\beta)$ such
that $f(\lambda_{i})=w_{i}$ and $\Norm{f}{l_{A}^{q}(-\beta)}\leq {C}$.
Moreover the inequality
\[
\Abs{\sum_{i=1}^{n}\alpha_{i}{f(z_{i})}}\leq {C}\left(\Abs{\sum_{i=1}^{n}\alpha_{i}}^{p}+\sum_{j\geq1}j^{\beta p}\Abs{\sum_{i=1}^{n}\alpha_{i}{z_{i}^{j}}}^{p}\right)^{1/p}
\]
holds for all sequence of distinct points $(z_{i})\subset\mathbb{D}$
and all sequence of complex numbers $(\alpha_{i})$.
\end{thm}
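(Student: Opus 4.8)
The plan is to exploit the finite-data version of the Functional extension lemma (Corollary~\ref{thm:InterpNP_l_p}) together with a normal-families / weak-$*$ compactness argument, following the structure of \cite[Lemma 4.1 and Theorem 4.2]{FH}. First I would observe that hypothesis~\eqref{eq:assumpt_ineq_acc_pt}, applied to every finite truncation $\lambda^{(n)}=(\lambda_i)_{i=1}^n$, produces via Corollary~\ref{thm:InterpNP_l_p} a function $f_n\in l_A^q(-\beta)$ with $f_n(\lambda_i)=w_i$ for $i=1,\dots,n$ and $\Norm{f_n}{l_A^q(-\beta)}\leq C$. Since the closed ball of radius $C$ in $l_A^q(-\beta)$ is weak-$*$ compact (recall $l_A^q(-\beta)$ is the dual of $l_A^p(-\beta)$ with $p$ the conjugate exponent, and its predual contains the polynomials densely), a subnet — or, since the predual is separable, a subsequence — $f_{n_k}$ converges weak-$*$ to some $f\in l_A^q(-\beta)$ with $\Norm{f}{l_A^q(-\beta)}\leq C$. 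Because point evaluation at any $z\in\mathbb{D}$ is given by pairing against the kernel $k_z\in l_A^p(-\beta)$, weak-$*$ convergence gives $f(z)=\lim_k f_{n_k}(z)$ pointwise; hence $f(\lambda_i)=w_i$ for every $i$, since each $\lambda_i$ is eventually interpolated. This establishes existence.

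Next I would prove uniqueness. Suppose $f,g\in l_A^q(-\beta)$ both satisfy the interpolation conditions with norm at most $C$; then $h=f-g\in l_A^q(-\beta)$ vanishes on the sequence $(\lambda_i)$. Since $\lambda$ has an accumulation point inside $\mathbb{D}$ and $h$ is holomorphic on $\mathbb{D}$ (as $l_A^q(-\beta)$ embeds continuously into $\mathcal{H}ol(\mathbb{D})$), the identity theorem for holomorphic functions forces $h\equiv 0$, so $f=g$. This is the step where the accumulation-point hypothesis is essential; without it the minimal interpolant need not be unique.

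Finally, for the extended inequality at an arbitrary sequence of distinct points $(z_i)\subset\mathbb{D}$: since $f$ is now a \emph{fixed} function in $l_A^q(-\beta)$ with $\Norm{f}{l_A^q(-\beta)}\leq C$, for each fixed $n$ the values $(f(z_1),\dots,f(z_n))$ are interpolated by $f$ itself, so by the easy direction of Corollary~\ref{thm:InterpNP_l_p} — i.e. the inequality~\eqref{AF} is implied by the existence of an interpolant of norm $\leq C$, which here is $f$ — we get
\[
\Abs{\sum_{i=1}^{n}\alpha_{i}{f(z_{i})}}\leq {C}\left(\Abs{\sum_{i=1}^{n}\alpha_{i}}^{p}+\sum_{j\geq1}j^{\beta p}\Abs{\sum_{i=1}^{n}\alpha_{i}{z_{i}^{j}}}^{p}\right)^{1/p}
\]
for every choice of $(\alpha_i)_{i=1}^n$. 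This is immediate once existence of $f$ is in hand.

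The main obstacle is the existence/limit step: one must verify carefully that the unit ball of $l_A^q(-\beta)$ is weak-$*$ compact and that evaluation functionals lie in the predual $l_A^p(-\beta)$, uniformly enough that the pointwise limit survives; the degenerate endpoint $q=\infty$ (predual $l_A^1(-\beta)$, $X$ not separable) requires passing to a subnet rather than a subsequence, but the argument is otherwise identical. The uniqueness and the final inequality are then soft consequences.
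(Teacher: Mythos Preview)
Your argument is correct, but it is genuinely different from the route the paper follows. The paper adapts FitzGerald--Horn \cite{FH} in a constructive way: assuming first that $(\lambda_i)$ converges to some $\lambda_0\in\mathbb{D}$, it uses the hypothesis~\eqref{eq:assumpt_ineq_acc_pt} repeatedly to show that the successive divided differences of the data $(w_i)$ converge, thereby determining all derivatives $f^{(j)}(\lambda_0)$; it then extends~\eqref{eq:assumpt_ineq_acc_pt} by continuity to include these derivative values, builds $f$ as a Taylor series about $\lambda_0$, and finally invokes analytic continuation to pass from a neighbourhood of $\lambda_0$ to all of $\mathbb{D}$. Your weak-$*$ compactness argument bypasses the divided-difference machinery and the analytic-continuation step entirely, which is cleaner and shorter; the price is that it is non-constructive and leans on the dual-space structure of $l_A^q(-\beta)$, whereas the FitzGerald--Horn approach actually exhibits the Taylor coefficients of $f$ at the accumulation point and would go through in settings where $X$ is not known to be a dual space.

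Two small corrections that do not affect your argument: the predual of $l_A^q(-\beta)$ carries the $l_A^p(\beta)$ norm (sign of $\beta$ flips), and at $q=\infty$ the predual $l_A^1(\beta)$ is separable, so subsequences already suffice there.
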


The existence of $f$ is a consequence of the following observations (see~\cite[Lemma 4.1]{FH} for details): Assuming first -- as in~\cite[Lemma 4.1]{FH} -- that the sequence $(\lambda_{i})_{i\geq1}$ is convergent, say to $\lambda_{0}\in\mathbb{D}$, the sequence $(w_{i})_{i}$ is bounded because $\abs{w_{i}}\leq\Norm{k_{\lambda_{i}}}{l_{A}^{p}(\beta)}$ for any $i\geq1$. It has at least one convergent subsequence
$(w_{i'})$ whose limit we shall denote by $w_{0}$. Using \eqref{eq:assumpt_ineq_acc_pt}
we write 
\[
\Abs{\sum_{i=1}^{n}\alpha_{i}\overline{w_{i}}+\alpha_{0}\overline{w_{i'}}}\leq\Norm{\sum_{i=1}^{n}\alpha_{i}k_{\lambda_{i}}+\alpha_{0}k_{\lambda_{i'}}}{l_{A}^{p}(\beta)}
\]
and observe that since $\lambda_{i'}\rightarrow \lambda_{0}$ as $i'\rightarrow\infty$
we can extend \eqref{eq:assumpt_ineq_acc_pt} by continuity to include
the index value $i=0$: 
\[
\Abs{\sum_{i=1}^{n}\alpha_{i}\overline{w_{i}}+\alpha_{0}\overline{w_{0}}}\leq\Norm{\sum_{i=1}^{n}\alpha_{i}k_{\lambda_{i}}+\alpha_{0}k_{\lambda_{0}}}{l_{A}^{p}(\beta)}.
\]
 Observe that for any $n\geq1$ we have 
\begin{align*}
\abs{w_{n}-w_{0}} & ^{p}\leq\Norm{k_{\lambda_{n}}-k_{\lambda_{0}}}{l_{A}^{p}(\beta)}^{p}\\
 & =\sum_{j\geq1}j^{\beta p}\abs{\lambda_{n}-\lambda_{0}}^{jp}\\
 & =\abs{\lambda_{n}-\lambda_{0}}^{p}\sum_{j\geq0}(j+1)^{\beta p}\abs{\lambda_{n}-\lambda_{0}}^{pj}
\end{align*}
but $\sum_{j\geq0}(j+1)^{\beta p}\abs{\lambda_{n}-\lambda_{0}}^{pj}\rightarrow_{n\rightarrow\infty}1$
and the whole sequence $(w_{n})$ actually converges to $w_{0}$.
Similarly it is shown that the sequence of divided differences
\[
\Delta(i,1)=\frac{f(\lambda_{i})-f(\lambda_{0})}{\lambda_{i}-\lambda_{0}}=\frac{w_{i}-w_{0}}{\lambda_{i}-\lambda_{0}}
\]
is also convergent. Denoting by $w_{-1}$ its limit $f$ must satisfy $f'(\lambda_{0})=w_{-1}$. Repeated application of this process leads to assertions about higher-order divided differences 
\[
\Delta(i,j+1)=\frac{\Delta(i,j)-w_{-j}}{\lambda_{i}-\lambda_{0}},\qquad j\geq1
\]
where $w_{-j}$ is the limit of the sequence $(\Delta(i,j))_{i\geq1}$. As before $(\Delta(i,j+1))_{i\geq1}$ is convergent. Denoting by $w_{-j-1}$ its limit we find that $f$ must satisfy $f^{(j+1)}(\lambda_{0})=(j+1)!w_{-j-1}$.
At the end of this process Inequality \eqref{eq:assumpt_ineq_acc_pt} can be extended by continuity so that its left-hand side becomes
\[
 \Abs{\sum_{i=1}^{n}\alpha_{i}\overline{w_{i}}+\sum_{i\geq0}\beta_{i}\frac{f^{(i)}(\lambda_{0})}{i!}}, 
 \]
 where $(\beta_{i})_{i\geq1}$ is any arbitrary sequence of complex numbers. Expanding $f$ in a Taylor series around $\lambda_{0}$ in terms of $w_{-i}$ one can show that $f$ solves the interpolation problem and express $f(z_{n})$ as a linear combination of $\left(\frac{f^{(i)}(\lambda_{0})}{i!}\right)_{i\geq0}$ to generalize \eqref{eq:assumpt_ineq_acc_pt} such that it holds for any sequence $(z_{n})$ of distinct points in a neighborhood of $\lambda_0$.

To complete the proof of Theorem~\ref{thm:accum_pt_l_p_spaces} it remains to follow the steps of \cite[Theorem~4.2]{FH}: Let $\lambda_{0}$ be an accumulation point in $\mathbb{D}$ of the sequence $(\lambda_{i})$, let $r>0$ be such that the closure of $D(\lambda_{0},r)=:\{z:\:\abs{z-\lambda_{0}}<r\}$ is contained in $\mathbb{D}$, and let $(\lambda_{i'})$ be a convergent subsequence of $(\lambda_{i})$ with $\lambda_{i'}\rightarrow\lambda_{0}$ and $\{\lambda_{i'}\}\subset D(\lambda_{0},r)$. We may use the above reasoning to conclude that there is an analytic function $f$ on $D(\lambda_{0},r)$ such that $f(\lambda_{i})=w_{i}$ for all $\lambda_{i}\in D(\lambda_{0},r)$ and which satisfies Inequality \eqref{eq:assumpt_ineq_acc_pt} for all sequence $(z_{i})$ in $D(\lambda_{0},r)$. Using the standard arguments of analytic continuation from \cite[p.~566]{FH} it is shown that $f$ can be continued to all of $\mathbb{D}$ and that Inequality \eqref{eq:assumpt_ineq_acc_pt} holds throughout $\mathbb{D}$.

\section{Proof of Corollary \ref{thm:NP}}

\begin{proof}[Proof of Corollary \ref{thm:NP}]
Step 1. We prove the lower bound
\[
I_{H^{\infty}}(\lambda,\,w)\geq \sup_{(\alpha_{i})\in\mathbb{C}^{n}}\frac{\sum_{i=1}^{n}\sum_{j=1}^{n}\frac{\overline{\alpha_{i}}\alpha_{j}\overline{w_{i}}w_{j}}{1-\overline{\lambda_{i}}\lambda_{j}}}{\sum_{i=1}^{n}\sum_{j=1}^{n}\frac{\overline{\alpha_{i}}\alpha_{j}}{1-\overline{\lambda_{i}}\lambda_{j}}}.
\]
First we observe that
\begin{eqnarray*}
\Norm{\sum_{i=1}^{n}\alpha_{i}\overline{w_{i}}k_{\lambda_{i}}}{H^{2}}^{2} & = & \sum_{i=1}^{n}\sum_{j=1}^{n}\overline{\alpha_{j}}\alpha_{i}\overline{w_{i}}w_{j}\left\langle k_{\lambda_{i}},\,k_{\lambda_{j}}\right\rangle \\
 & = & \sum_{i=1}^{n}\sum_{j=1}^{n}\frac{\alpha_{i}\overline{\alpha_{j}}\overline{w_{i}}w_{j}}{1-\overline{\lambda_{i}}\lambda_{j}}.
\end{eqnarray*}
Second because of {the} Hahn-Banach formula for the norm in $H^{2}=(H^{2})'$
we have
\begin{eqnarray*}
\Norm{\sum_{i=1}^{n}\alpha_{i}\overline{w_{i}}k_{\lambda_{i}}}{H^{2}} & = & \sup_{\Norm{h}{H^{2}}\leq1}\inp*{\sum_{i=1}^{n}\alpha_{i}\overline{w_{i}}k_{\lambda_{i}}}{h}\\
 & = & \sup_{\Norm{h}{H^{2}}\leq1}\Abs{\sum_{i=1}^{n}\alpha_{i}\overline{w_{i}}\overline{h(\lambda_{i})}}\\
 & = & \sup_{\Norm{h}{H^{2}}\leq1}\Abs{\sum_{i=1}^{n}\alpha_{i}\overline{f(\lambda_{i})}\overline{h(\lambda_{i})}}\\
 & = & \sup_{\Norm{h}{H^{2}}\leq1}\inp*{\sum_{i=1}^{n}\alpha_{i}k_{\lambda_{i}}}{fh}
\end{eqnarray*}
for any $f\in H^{\infty}$ such that $f(\lambda_{i})=w_{i}$. Applying Cauchy-Schwarz inequality we find
\begin{eqnarray*}
\Abs{\inp*{\sum_{i=1}^{n}\alpha_{i}k_{\lambda_{i}}}{fh}} & \leq & \Norm{\sum_{i=1}^{n}\alpha_{i}k_{\lambda_{i}}}{H^{2}}\Norm{fh}{H^{2}}\\
 & \leq & \Norm{\sum_{i=1}^{n}\alpha_{i}k_{\lambda_{i}}}{H^{2}}\Norm{f}{H^{\infty}}\Norm{h}{H^{2}}
\end{eqnarray*}
which yields
\[
\Norm{\sum_{i=1}^{n}\alpha_{i}\overline{w_{i}}k_{\lambda_{i}}}{H^{2}}\leq\Norm{\sum_{i=1}^{n}\alpha_{i}k_{\lambda_{i}}}{H^{2}}\Norm{f}{H^{\infty}}
\]
for any $f\in H^{\infty}$ such that $f(\lambda_{i})=w_{i}$. Therefore
\[
I_{H^{\infty}}(\lambda,\,w)\geq\frac{\Norm{\sum_{i=1}^{n}\alpha_{i}\overline{w_{i}}k_{\lambda_{i}}}{H^{2}}}{\Norm{\sum_{i=1}^{n}\alpha_{i}k_{\lambda_{i}}}{H^{2}}}
\]
for all $(\alpha_{i})\in\mathbb{C}^{n}$. It
remains to replace $\alpha_{i}$ by $\overline{\alpha_{i}}$ to complete
the proof of Step 1.

Step 2. We prove the upper bound
\[
I_{H^{\infty}}(\lambda,\,w)\leq \sup_{(\alpha_{i})\in\mathbb{C}^{n}}\frac{\sum_{i=1}^{n}\sum_{j=1}^{n}\frac{\overline{\alpha_{i}}\alpha_{j}\overline{w_{i}}w_{j}}{1-\overline{\lambda_{i}}\lambda_{j}}}{\sum_{i=1}^{n}\sum_{j=1}^{n}\frac{\overline{\alpha_{i}}\alpha_{j}}{1-\overline{\lambda_{i}}\lambda_{j}}}.
\]
To this aim we observe that according to Lemma \ref{thm:main} the
interpolation quantity $I_{H^{\infty}}(\lambda,\,w)$ is the {smallest}
$C>0$ such that
\begin{eqnarray}
\Abs{\sum_{i=1}^{n}\alpha_{i}\overline{w_{i}}} & \leq & C\Norm{\sum_{i=1}^{n}\alpha_{i}k_{\lambda_{i}}}{L^{1}/\overline{H_{0}^{1}}}\label{eq:duality_NP}\\
 & = & C\inf\left\{ \Norm{\sum_{i=1}^{n}\alpha_{i}k_{\lambda_{i}}+\overline{zg}}{L^{1}}:\:g\in H^{1}\right\} \nonumber
\end{eqnarray}
for any sequence of complex numbers $(\alpha_{i})_{i}$.
We show that
\[
C=\sup_{(\alpha_{i})\in\mathbb{C}^{n}}\frac{\sum_{i=1}^{n}\sum_{j=1}^{n}\frac{\overline{\alpha_{i}}\alpha_{j}\overline{w_{i}}w_{j}}{1-\overline{\lambda_{i}}\lambda_{j}}}{\sum_{i=1}^{n}\sum_{j=1}^{n}\frac{\overline{\alpha_{i}}\alpha_{j}}{1-\overline{\lambda_{i}}\lambda_{j}}}
\]
satisfies \eqref{eq:duality_NP}. We know from
Lemma \ref{thm:main} that there exists $f\in H^{\infty}$such that
$f(\lambda_{i})=w_{i}$ and $I_{H^{\infty}}(\lambda,\,w)=\Norm{f}{\infty}.$
We have
\begin{eqnarray*}
\sum_{i=1}^{n}\overline{\alpha_{i}}w_{i} & = & \sum_{i=1}^{n}\overline{\alpha_{i}}f(\lambda_{i})\\
 & = & \left\langle f,\,\sum_{i=1}^{n}\alpha_{i}k_{\lambda_{i}}\right\rangle =\left\langle f,\,\sum_{i=1}^{n}\alpha_{i}k_{\lambda_{i}}+\overline{zg}\right\rangle ,\qquad\forall g\in H^{1}
\end{eqnarray*}
because $\left\langle f,\,\overline{zg}\right\rangle =\left\langle zfg,\,1\right\rangle =0$.
Let 
\[
B=\prod_{i=1}^{n}\frac{z-\lambda_{i}}{1-\overline{\lambda_{i}}z}
\]
be the Blaschke product corresponding to  $\lambda=(\lambda_{i})_{i=1}^{n}$ and consider the space
\[
K=K_{\lambda}={\rm span}(k_{\lambda_{i}},\,i=1\dots n)=H^{2}\cap(BH^{2})^{\bot}.
\]
Observing that $B\overline{B}=1$ on the unit circle $\partial\mathbb{D}$
we get for any $g\in H^{1}$
\begin{eqnarray*}
\sum_{i=1}^{n}\overline{\alpha_{i}}w_{i} & = & \left\langle f,\,B\left(\sum_{i=1}^{n}\alpha_{i}\overline{B}k_{\lambda_{i}}+\overline{zBg}\right)\right\rangle \\
 & = & \left\langle f\left(\sum_{i=1}^{n}\overline{\alpha_{i}}B\overline{k_{\lambda_{i}}}+zBg\right),\,B\right\rangle .
\end{eqnarray*}
It is easy to check that $\sum_{i=1}^{n}\overline{\alpha_{i}}B\overline{k_{\lambda_{i}}}\in zK$
and that $zBg\in zH^{1}$. We put
\[
\varphi=\sum_{i=1}^{n}\overline{\alpha_{i}}B\overline{k_{\lambda_{i}}}+zBg=z{\varphi_{0}}
\]
where ${\varphi_{0}}\in K+H^{1}$ so that 
\[
\sum_{i=1}^{n}\overline{\alpha_{i}}w_{i}=\left\langle zf\varphi_{0},\,B\right\rangle .
\]
Following D. Sarason we transform the above right-hand side adapting his proof of \cite[Lemma 2.1]{SD} for completeness.
It is well-known \cite[Corollary 3.7.4]{NN2}  that the zeroes $(\mu_{i})_{i\geq1}$ of
$\varphi_{0}\in H^{1}$ satisfy the
Blaschke condition $\sum_{i}(1-\abs{\mu_{i}})<\infty$. Thus defining the Blaschke product $B_{0}=\prod_{i\geq1}\frac{z-\mu_{i}}{1-\overline{\mu_{i}}z}$ the function $\frac{\varphi_{0}}{B_{0}}$ belongs also to the
Hardy space $H^{1}$ and does not vanish on $\mathbb{D}$. Therefore
there exists $h\in\mathcal{H}ol(\mathbb{D})$ such that $\frac{\varphi_{0}}{B_{0}}=h^{2}.$
We write $\varphi_{0}=f_{1}f_{2}$ where $f_{1}=B_{0}h$ and $f_{2}=h$.
We obviously have $\abs{f_{1}(z)}^{2}=\abs{f_{2}(z)}^{2}=\abs{\varphi_{0}(z)}$
for any $z\in\partial\mathbb{D}$. Going back to the last expression
of $\sum_{i=1}^{n}\overline{\alpha_{i}}w_{i}$ we denote by $P$ the
orthogonal projection from $H^{2}$ to $K$ and write:
\begin{eqnarray*}
\sum_{i=1}^{n}\overline{\alpha_{i}}w_{i} & = & \left\langle zff_{1}f_{2},\,B\right\rangle \\
 & = & \left\langle zf(f_{1}-Pf_{1}+Pf_{1})f_{2},\,B\right\rangle \\
 & = & \left\langle zf(Pf_{1})f_{2},\,B\right\rangle
\end{eqnarray*}
because $f_{1}-Pf_{1}\in BH^{2}$ and $B\perp  z{f_{2}}BH^{2}.$ For the same
reason
\[
\sum_{i=1}^{n}\overline{\alpha_{i}}w_{i}=\left\langle zf(Pf_{1})(Pf_{2}),\,B\right\rangle =\left\langle zf(Pf_{1}),\,B\overline{(Pf_{2})}\right\rangle .
\]
It is easy to check that $B\overline{(Pf_{2})}\in zK$. Therefore
we put $g_{1}=Pf_{1}$, $g_{2}=\frac{B\overline{(Pf_{2})}}{z}$ :
$g_{1}$ and $g_{2}$ belong to $K$ and for $i=1,2$ we have
\[
\Norm{g_{i}}{H^{2}}\leq\Norm{Pf_{i}}{H^{2}}\leq\Norm{f_{i}}{H^{2}}=\sqrt{\Norm{\varphi_{0}}{H^{1}}}.
\]
To conclude we have
\[
\sum_{i=1}^{n}\overline{\alpha_{i}}w_{i}=\left\langle fg_{1},\,g_{2}\right\rangle ,\qquad g_{i}\in K.
\]
There exists $(\beta_{i})$ such that $g_{2}=\sum_{i=1}^{n}\beta_{i}k_{\lambda_{i}}$
and therefore
\[
\sum_{i=1}^{n}\overline{\alpha_{i}}w_{i}=\sum_{i=1}^{n}\overline{\beta_{i}}w_{i}g_{1}(\lambda_{i})=\left\langle g_{1},\,\sum_{i=1}^{n}\beta_{i}\overline{w_{i}}k_{\lambda_{i}}\right\rangle .
\]
Applying Cauchy-Schwarz inequality we find
\begin{eqnarray*}
\Abs{\sum_{i=1}^{n}\overline{\alpha_{i}}w_{i}}^{2} & \leq & \Norm{g_{1}}{H^{2}}^{2}\Norm{\sum_{i=1}^{n}\beta_{i}\overline{w_{i}}k_{\lambda_{i}}}{H^{2}}^{2}\\
 & \leq & \Norm{\varphi_{0}}{H^{1}}\frac{\Norm{\sum_{i=1}^{n}\beta_{i}\overline{w_{i}}k_{\lambda_{i}}}{H^{2}}^{2}}{\Norm{g_{2}}{H^{2}}^{2}}\Norm{g_{2}}{H^{2}}^{2}\\
 & \leq & \Norm{\varphi_{0}}{H^{1}}^{2}\frac{\Norm{\sum_{i=1}^{n}\beta_{i}\overline{w_{i}}k_{\lambda_{i}}}{H^{2}}^{2}}{\Norm{\sum_{i=1}^{n}\beta_{i}k_{\lambda_{i}}}{H^{2}}^{2}}\\
 & \leq & \Norm{\varphi_{0}}{H^{1}}^{2}\sup_{(\alpha_{i})\in\mathbb{C}^{n}}\frac{\sum_{i=1}^{n}\sum_{j=1}^{n}\frac{\overline{\alpha_{i}}\alpha_{j}\overline{w_{i}}w_{j}}{1-\overline{\lambda_{i}}\lambda_{j}}}{\sum_{i=1}^{n}\sum_{j=1}^{n}\frac{\overline{\alpha_{i}}\alpha_{j}}{1-\overline{\lambda_{i}}\lambda_{j}}}.
\end{eqnarray*}
To complete the proof it remains to recall that
\begin{eqnarray*}
\Norm{\varphi_{0}}{H^{1}} & = & \Norm{z\varphi_{0}}{H^{1}}=\Norm{\varphi}{H^{1}}\\
 & = & \Norm{\sum_{i=1}^{n}\overline{\alpha_{i}}B\overline{k_{\lambda_{i}}}+zBg}{H^{1}}=\Norm{\sum_{i=1}^{n}\alpha_{i}k_{\lambda_{i}}+\overline{zg}}{L^{1}}
\end{eqnarray*}
for any $g\in H^{1}.$
\end{proof}
\section*{Acknowledgments}
The authors are deeply grateful to the Referee and to Kenneth R. Davidson for valuable comments on an earlier version of the manuscript. The second author acknowledges the Russian Science Foundation grant 14-41-00010.

\end{document}